\documentclass[10pt,a4paper]{amsart}
    \pdfminorversion=5 
    \pdfcompresslevel=9
    \pdfobjcompresslevel=2
\usepackage{graphicx}
\usepackage{epsfig}
\usepackage{amsmath, amssymb}
\usepackage{placeins}
\usepackage{amsfonts,amsmath, amsthm}
\usepackage{verbatim}
\usepackage{xfrac}
\usepackage{anyfontsize}

\newtheorem{theorem}{Theorem}

\newtheorem{rem}{Remark}[theorem]
\newtheorem{prop}{Proposition}[theorem]

\DeclareMathOperator{\li}{li}
\DeclareMathOperator{\supp}{supp}

\author{Dimitris Vartziotis}
\address{NIKI Ltd. Digital Engineering, Research Center, 205 Ethnikis Antistasis Street,
45500 Katsika, Ioannina, Greece \and \\TWT GmbH Science \& Innovation, Department for Mathematical Research \&
Services, Ernsthaldenstr. 17, 70565 Stuttgart, Germany }
\email{dimitris.vartziotis@nikitec.gr }

\author{Doris Bohnet}
\address{TWT GmbH Science \& Innovation, Department for Mathematical Research \&
Services, Ernsthaldenstr. 17, 70565 Stuttgart, Germany}
\email{doris.bohnet@twt-gmbh.de}

\title[Fractal curves from primes]{Fractal curves from prime trigonometric series}
\begin{document}

\begin{abstract}We study the convergence of the parameter family of series $$V_{\alpha,\beta}(t)=\sum_{p}p^{-\alpha}\exp(2\pi i p^{\beta}t),\quad \alpha,\beta \in \mathbb{R}_{>0},\; t \in [0,1)$$ defined over prime numbers $p$, and subsequently, their differentiability properties. The visible fractal nature of the graphs as a function of $\alpha,\beta$ is analyzed in terms of H\"{o}lder continuity, self similarity and fractal dimension, backed with numerical results. We also discuss the link of this series to random walks and consequently, explore numerically its random properties. \\

\end{abstract}
\maketitle
\section{Introduction}  
The prime numbers are not randomly distributed but, there are random models that capture well important properties of the distribution of prime numbers (e.g. \cite{cramer_1936}). The random behavior of a deterministic mathematical object can be found elsewhere: there are classical function series that can be approximated by random processes. Let us briefly describe these series:\\
consider the two functions $f_n(x)=\sin(2\pi nx)$ and $f_{n+1}(x)=\sin(2\pi (n+1)x)$ for an arbitrary integer $n\in \mathbb{N}$. These behave as strongly dependent random variables if we consider $x$ to be a random real variable uniformly distributed on some interval. But if one picks from the sequence of frequencies $(2\pi nx)_{n\geq 0}$ a sub sequence $(2\pi n_k)_{k\geq 0}$ such that the integer sequence grows sufficiently fast, i.e. $\sfrac{n_{k+1}}{n_k} \geq 1 + \rho, \rho > 0$, the quantities $f_{n_k}(x)$ and $f_{n_{k+1}}(x)$ behave like independent random variables (see Fig.~\ref{fig:introduction} as an example and \S~\ref{sec:random}). 
\begin{figure}
\begin{minipage}{0.46\textwidth}
 \includegraphics[width=\textwidth]{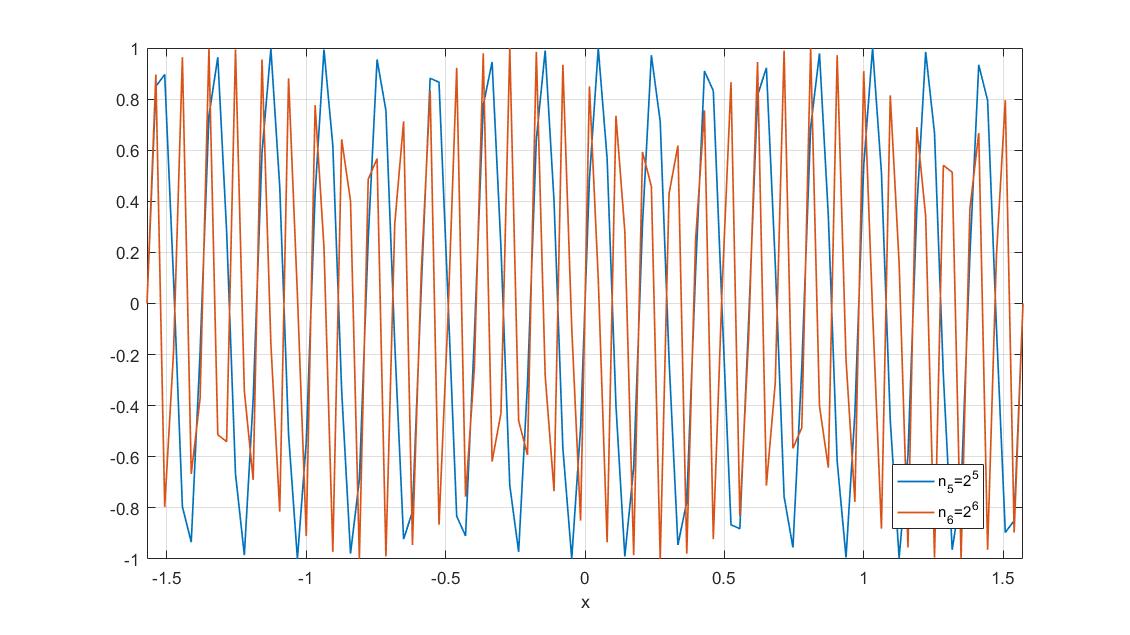}

\end{minipage}  
\begin{minipage}{0.46\textwidth}
 \includegraphics[width=\textwidth]{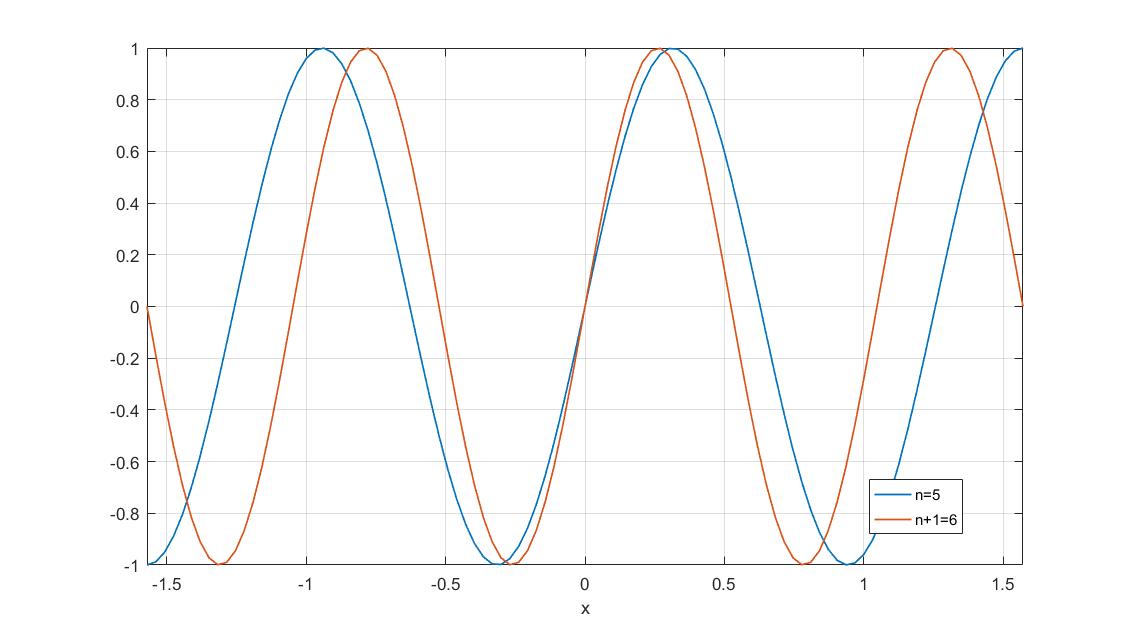}

\end{minipage} 
\caption{Graph of $\sin(2^5\pi x)$ and $\sin(2^6\pi x)$ on the left and of $\sin(5\pi x)$ and $\sin(6\pi x)$ on the right. } 
\label{fig:introduction}
\end{figure}
Now, one can construct a random walk out of these random variables: start at $0$. At time $k$ move $f_{n_k}(x)$ up. At time $N$, we find ourselves at $S(x,N)=\sum_{k=0}^N f_{n_k}(x)$. This sum is displayed for $N=1000$ in Fig.~\ref{fig:introduction_2} on the left. \\
Our example is known as a \emph{lacunary Fourier series}, that is, its frequencies fulfill the growth condition given above. Its random properties are a classical field of research. In the literature, the sequence of prime numbers $(2\pi p_k)_{k\geq 0}$ is often cited as a counterexample for a sequence of frequencies which does not give rise to a lacunary Fourier series: it does neither fulfill the growth condition nor alternative conditions on arithmetic patterns which exist in the literature. However, experiments in this article suggest that $\sum_k \sin(\pi p_k x)$ share a lot of the random properties of lacunary series (see Fig.~\ref{fig:introduction_2} for a first impression or \cite{Vartziotis_Wipper_2016}): for instance, the central limit theorem seems to hold. Unfortunately, this looks difficult to prove.\\
\begin{figure}
\begin{minipage}{0.46\textwidth}
 \includegraphics[width=\textwidth]{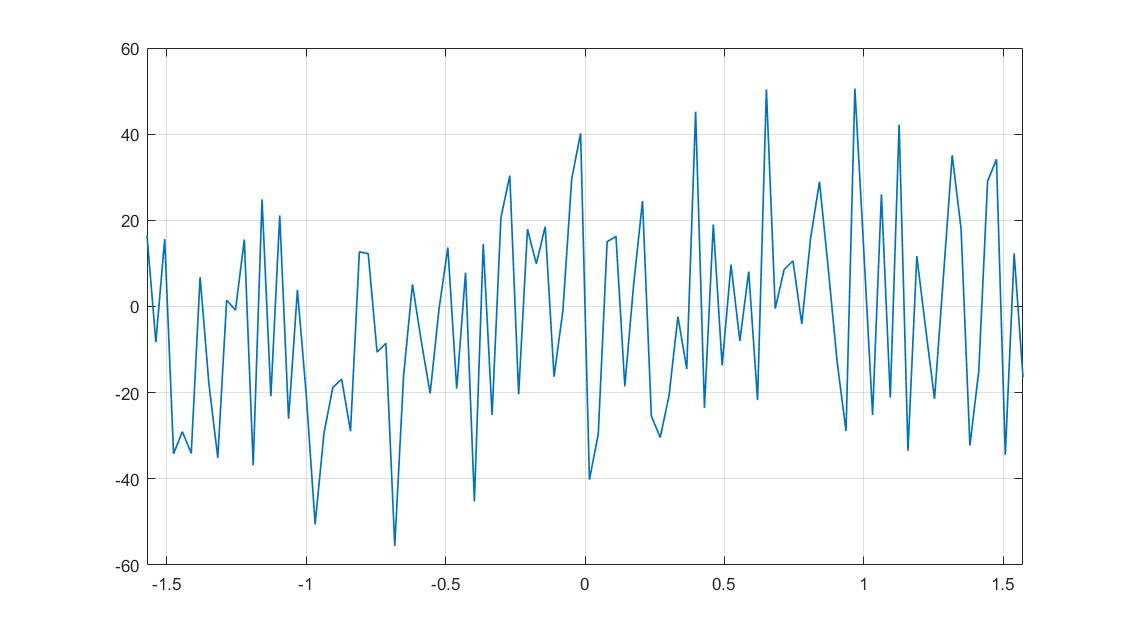}
\end{minipage} 
\begin{minipage}{0.46\textwidth}
 \includegraphics[width=\textwidth]{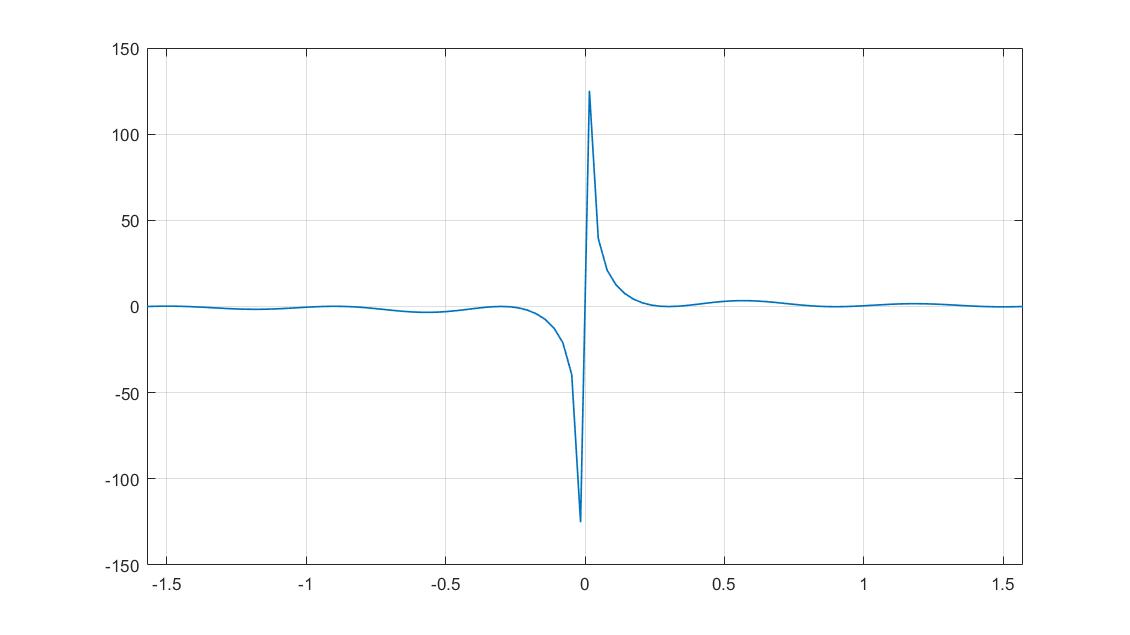}
\end{minipage}  
\begin{minipage}{0.46\textwidth}
 \includegraphics[width=\textwidth]{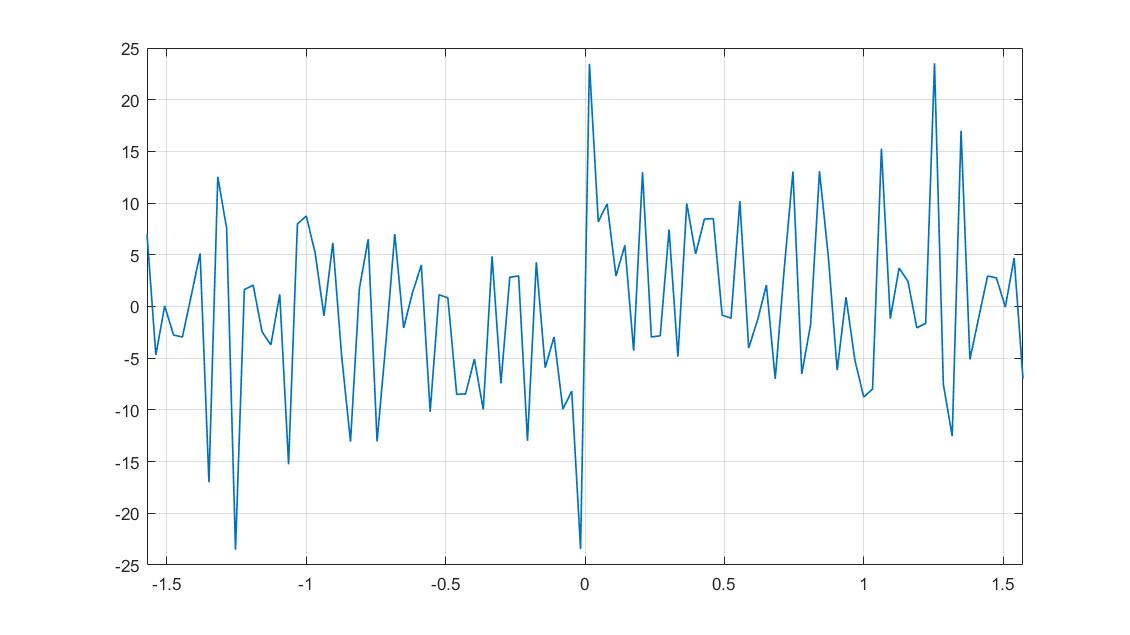}
\end{minipage}  
\caption{Graph of $\sum_{n=1}^{1000}\sin(2^n\pi x)$ on the left, of $\sum_{n=1}^{1000}\sin(n\pi x)$ in the middle and of $\sum_{p \leq 1000}\sin(p\pi x)$ on the right. } 
\label{fig:introduction_2}
\end{figure}
On the other hand, we can look at other manifestations of randomness in lacunary series (e.g. in the example in Fig.~\ref{fig:introduction_2}) and try to see if they are also present in our prime series $V_{\alpha,\beta}$: by introducing appropriate coefficients $a_k$, the walk $\sum_ka_k\sin(\pi n_kx)$ can be approximated by a \emph{Wiener process} which is an almost everywhere continuous random walk with independent normally distributed increments (see \cite{philipp_stout_1975} and \S~\ref{sec:random}). This implies directly a lot of interesting properties for the series, e.g. the law of iterated logarithm holds. It would be interesting if a similar approximation exists for our series $V_{\alpha,\beta}$. Again, we were not able to prove this.\\

But we can show that our series $V_{\alpha,\beta}$ has in fact for specific $\alpha,\beta$ properties in common with a Wiener process, e.g. its regularity and fractality (see \S~\ref{sec:differentiability}).  
The above-mentioned example $\sum_k a_k \sin(2^k\pi x)$ is in fact famous for these reasons: It belongs to the family of Weierstrass functions $F_{a,b}(x) = \sum_{n=0}^{\infty}a^n\sin(b^nt)$ which have been extensively studied for its differentiability properties. Under certain conditions on $a,b$ this function is nowhere differentiable, but H\"older continuous. \\
Another historical example which is non-differentiable, but multifractal, is the Riemann function $R_{2}(x)=\sum_{n=1}^{\inf}n^{-2}\sin(n^2x)$. Note, that it is not a lacunary series as $\sfrac{(n+1)^2}{n^2} \rightarrow 1$. With our prime series, we place ourselves in between these two historical examples with respect to the growth of its frequencies. \\

While prime sums are extensively studied in the context of the famous prime conjectures (e.g. for Vinogradov's theorem and the like), we have not found a treatment of trigonometric series over prime numbers. The reason for this is most probably that these series have not the necessary form to help to progress in the proofs of the prime conjectures where prime exponential sums play a dominant role. As mentioned above, these series have not been studied in the context of lacunary series as prime numbers do neither grow fast enough nor do they have known arithmetic properties which are necessary for a straightforward analysis.\\
By using results of prime number theory, we are nevertheless able to show conditions on the differentiability and self similarity of our prime series. Experimentally, we explore also its box dimension in dependence on $\alpha,\beta$. 
\begin{rem}
For most of our questions, we can restrict ourselves -- without loss of generality -- to the real part $\sum_p p^{-\alpha}\cos(2\pi p^{\beta}t)$ of the series which we denote by $V_{\alpha,\beta}(t)$, too.
\end{rem}

\section*{Acknowledgement}
We would like to thank the referee for his remarks and Florian Pausinger for his helpful hints and questions. 
\section{Convergence and differentiability}\label{sec:differentiability}

There are basically two factors which influence the smoothness and convergence of a function series $\sum_{k}a_k\exp(2\pi in_k t)$ as ours: 
\begin{enumerate}
\item The faster the coefficients $a_k$ decrease for $k \rightarrow \infty$, the smaller is the influence of the higher frequencies. This implies that the series converges better and the resulting function is smoother.
\item The faster the frequencies $n_k$ increase or equivalently, the greater the gaps, the smaller gets the period of the oscillation so that one obtains more peaks and sinks in one interval which increases the fractal character. 
\end{enumerate}
\subsection{Historical remarks }\label{sec:historical}
The nature of these influences, easily deduced, are also backed by the long history of studies on the following two families of functions (and derived families): \\
Let 
$$F_{a,b}(t)=\sum_{n=0}^{\infty}a^n\cos(b^nt)$$
be the family of Weierstrass functions which have been extensively studied. One knows the following: 
\begin{theorem}[\cite{hardy1916},\cite{jaffard2010}]
\begin{enumerate}
\item If $0 < ab <1, a<1,b>1$, then $F_{a,b}$ is differentiable. 
\item If $0<a<1<ab$, then $F_{a,b}$ is nowhere differentiable. Further, the H\"older exponent is a constant function $s = -\frac{\log a}{\log b}$, i.e. for all $t,t_0$ it holds
$$\left|F_{a,b}(t)-F_{a,b}(t_0)\right| \leq C\left|t-t_0\right|^{s}.$$ 
\end{enumerate}
\end{theorem}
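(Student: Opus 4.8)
The plan is to handle the two regimes separately, in each case exploiting that the frequencies $b^n$ grow geometrically, so that $F_{a,b}$ is a lacunary cosine series. For part (1) I would differentiate term by term. The formal derivative is $-\sum_{n\ge 0}(ab)^n\sin(b^n t)$, whose general term is bounded in absolute value by $(ab)^n$; since $ab<1$ the Weierstrass $M$-test gives uniform convergence of this series, and $\sum_n a^n$ converges because $a<1$, so $F_{a,b}$ itself converges pointwise. The classical theorem on term-by-term differentiation of a pointwise convergent series whose derivative series converges uniformly then shows that $F_{a,b}$ is differentiable (indeed $C^1$), with derivative the displayed series. This part is routine.

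For the H\"older bound in part (2) I would split the series at a cutoff adapted to the separation $h=|t-t_0|$. Choosing $N$ with $b^N\approx 1/h$, i.e. $N\approx \log(1/h)/\log b$, I estimate the low-frequency block $n<N$ by the mean value theorem, $|a^n(\cos b^n t-\cos b^n t_0)|\le (ab)^n h$; since $ab>1$ this geometric sum is comparable to its last term and contributes $\lesssim (ab)^N h\approx a^N$. The high-frequency tail $n\ge N$ I bound crudely by $\sum_{n\ge N}2a^n\lesssim a^N$. Adding the two blocks gives $|F_{a,b}(t)-F_{a,b}(t_0)|\lesssim a^N$, and since
$$a^N=a^{\log(1/h)/\log b}=h^{-\log a/\log b}=h^{s},\qquad s=-\frac{\log a}{\log b}\in(0,1),$$
this is exactly the asserted H\"older estimate (the exponent lies in $(0,1)$ precisely because $a<1<ab$).

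The nowhere differentiability is the main obstacle. The strategy is to fix $t_0$ and exhibit increments $h_m\to 0$ of size $h_m\asymp b^{-m}$ along which the secant slope is unbounded: if one can show $|F_{a,b}(t_0+h_m)-F_{a,b}(t_0)|\ge c\,a^m$ for a fixed $c>0$, then the difference quotient is $\gtrsim a^m/b^{-m}=(ab)^m\to\infty$, ruling out a finite derivative. The difficulty is that this lower bound does not follow from isolating the $m$-th term alone: the same scaling that makes the critical term change by $\sim a^m$ also lets the low-frequency block $n<m$ drift by an amount $\sim a^m/(ab-1)$, while the tail $n>m$ contributes $\lesssim a^{m+1}$, so a priori these could cancel the oscillation one is trying to detect. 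Controlling this interference is where essentially all the work lies.

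I would follow Hardy here: rather than a single increment, select a two-sided pair $x_m<t_0<y_m$ with $y_m-x_m\asymp b^{-m}$ so that the critical cosine is forced through a full swing regardless of $t_0$, and estimate the low- and high-frequency remainders sharply enough --- using only the lacunarity $b>1$ --- that the secant slope across $[x_m,y_m]$ stays bounded below by a multiple of $(ab)^m$. Weierstrass's original, more restrictive hypotheses ($b$ an odd integer and $ab>1+\tfrac{3\pi}{2}$) are exactly the device that trivializes this tail control by making every term beyond the $m$-th evaluate at an odd multiple of $\pi$, whereas Hardy's refinement attains the full range $ab>1$ through the more careful selection of increments just described. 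I expect the tail estimate in this two-sided argument to be the genuinely delicate step.
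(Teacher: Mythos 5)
Your part (1) and the H\"older upper bound in part (2) are correct and complete: term-by-term differentiation under the $M$-test for $ab<1$, and the cutoff $b^N\asymp 1/h$ with geometric domination of the low block by its last term plus the crude tail bound, giving exactly $h^{s}$ with $s=-\log a/\log b\in(0,1)$. (This is the same splitting the paper itself deploys later for $V_{\alpha,\beta}$; note the paper never proves the present theorem --- it quotes it with citations to Hardy and Jaffard --- so your argument must stand on its own.) It does not, at the one place you flag. Nowhere differentiability, equivalently the claim that the pointwise H\"older exponent \emph{equals} $s$ everywhere rather than merely being at most $s$ somewhere, is left as a plan, and the plan as described cannot reach the full range $ab>1$. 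To force the critical cosine through a definite swing ``regardless of $t_0$'' you need an interval of length comparable to the full period $2\pi b^{-m}$; over such an interval your own accounting bounds the low-frequency drift only by roughly $2\pi a^m/(ab-1)$, plus a tail of order $a^{m+1}/(1-a)$, and these exceed the swing $\sim a^m$ as soon as $ab$ is near $1$ (or $a$ near $1$). This is not removable bookkeeping: it is precisely why Weierstrass needed $ab>1+\tfrac{3\pi}{2}$, and Hardy's 1916 proof of the full range is \emph{not} a refined selection of increments --- it proceeds analytically, via Abel--Poisson summation and asymptotics of associated theta-type series. So ``estimate the remainders sharply enough using only lacunarity $b>1$'' is the step that fails, not merely the delicate one.

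The natural way to close the gap --- and the reason the statement also cites Jaffard --- is the wavelet bound the paper records as Proposition~\ref{prop:jaffard}, which the paper itself applies in exactly this fashion to $V_{\alpha,\beta}$. Choose $\phi$ with $\hat{\phi}(0)=1$ and $\supp(\hat{\phi})$ in a small enough neighborhood of $0$; since the frequencies $b^n$ are Hadamard-separated, the Gabor transform of $F_{a,b}$ at scale $b^{-m}$, center $t_0$, frequency $b^m$ isolates the $m$-th term and satisfies $\left|G\left(b^{-m},t_0,b^m\right)\right|\geq c\,a^m$ uniformly in $t_0$. If $F_{a,b}$ were differentiable at some $t_0$, or merely pointwise H\"older there with exponent $s'>s$, the proposition (in Jaffard's pointwise form, with $\lambda=b^m$ equal to the inverse scale) would force $a^m\leq C\,b^{-ms'}$, i.e. $\left(ab^{s'}\right)^m\leq C$; but $ab^{s}=1$, so $ab^{s'}=b^{s'-s}>1$ and the left-hand side diverges. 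Hence the exponent is exactly $s$ at every point, and since $s<1$ the function is nowhere differentiable. Your two correct blocks plus this single wavelet computation yield the whole theorem; as it stands, the proposal proves only the upper regularity half.
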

On the other hand, one has the family of Riemann's function (whose authorship by Riemann is apparently only confirmed by Weierstrass) defined by 
$$R_{\alpha}(x)=\sum_{n=1}^{\infty} n^{-\alpha}\cos(n^2x)$$
which has the following proven properties:
\begin{theorem}[\cite{hardy1916}, \cite{hardy_littlewood1912},\newline \cite{gerver1970,gerver1970b},\cite{CC96}]
\begin{enumerate}
\item If $0< \alpha \leq \frac{1}{2}$, then the series is not a Fourier series of a $L^1$-function. If $0 < \alpha < \frac{1}{2}$, then $R_{\alpha}$ converges at $x$ if and only if $x =\frac{a}{q}$, where $a,q$ are coprime and $4$ divides $q-2$. 
\item If $\frac{1}{2} < \alpha < 1$, then the series converges in $p$-norm to a $L^p$-function for $p < \frac{2}{1-\alpha}$. 
\item If $\alpha =1$, then the series has bounded mean oscillation. 
\item If $\alpha < \frac{5}{2}$, then $R_{\alpha}$ is not differentiable at any irrational value of $x$, and its Hausdorff dimension for $\frac{3}{2}\leq \alpha \leq \frac{5}{2}$ is equal to $$\dim_H(R_{\alpha})=\frac{9}{4}-\frac{\alpha}{2}.$$ If $\alpha=2$, then $R_2$ is differentiable at $x$ if and only if $x=\frac{a}{q}$ where $a,q$ are coprime and $4$ divides $q-2$. 
\item If $\alpha=2$, the Hoelder exponent is discontinuous everywhere. In fact, $R_{2}$ is a function with unbounded variation and multifractal. 
\end{enumerate}
\end{theorem}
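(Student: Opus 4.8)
The plan is to route every part of the statement through a single structural fact and then harvest the five conclusions as consequences. The Jacobi theta function $\theta(x)=\sum_{n\in\mathbb Z}e^{in^2x}$ obeys a modular transformation law, and its one-sided companion $\Theta(x)=\sum_{n\ge1}e^{in^2x}$, viewed as a boundary value from the upper half plane, develops near every rational point $x_0=2\pi a/q$ a singularity of the shape $c(a,q)\,(x-x_0)^{-1/2}$ whose amplitude $c(a,q)$ is governed by the quadratic Gauss sum $G(a,q)=\sum_{n=0}^{q-1}e^{2\pi i a n^2/q}$. The key observation is that $R_\alpha$ is, up to taking real parts, the fractional integral of order $\alpha/2$ of $\Theta$: multiplying the frequency-$n^2$ mode by $n^{-\alpha}=(n^2)^{-\alpha/2}$ is exactly fractional integration of that order, so each $(x-x_0)^{-1/2}$ singularity of $\Theta$ is smoothed into one of order $(x-x_0)^{(\alpha-1)/2}$ for $R_\alpha$. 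I would make this local description rigorous by applying the theta transformation on a Farey/Stern--Brocot dissection of the period and summing by parts against the coefficients $n^{-\alpha}$, keeping $G(a,q)$ as the controlling constant.

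With this local model, parts (1)--(3) become bookkeeping about the exponent $(\alpha-1)/2$ and the density of the Farey dissection, where two effects compete. Local integrability of a single $|x-x_0|^{(\alpha-1)/2}$ singularity requires $p\cdot\frac{1-\alpha}{2}<1$, i.e. $p<\frac{2}{1-\alpha}$, which is exactly the upper bound in part (2); and summing these contributions over all fractions $a/q$, weighted by the amplitude $q^{-1}G(a,q)$ of size $q^{-1/2}$ and spread over Farey arcs of width $q^{-2}$, converges precisely when $\alpha>\frac12$. For $\alpha\le\frac12$ this accumulation over denominators already diverges at $p=1$, so $R_\alpha$ cannot be the Fourier series of an $L^1$ function, which is part (1), while the endpoint $\alpha=1$ turns the power singularity into a logarithm, the prototype of a BMO function, giving part (3). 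Finally the pointwise series at $x=2\pi a/q$ converges (for $\alpha\le\frac12$) exactly when $G(a,q)$ vanishes, which by the classical evaluation of quadratic Gauss sums happens iff $q\equiv2\pmod 4$, i.e. $4\mid q-2$; at every other point the partial sums grow like $N^{1/2-\alpha}$ and diverge. Here the only real care is to control the summation-by-parts remainders using van der Corput/Weyl bounds for the incomplete Gauss sums $\sum_{n\le N}e^{in^2x}$.

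For parts (4)--(5) I would exploit that the theta transformation is not one identity but a \emph{renormalization}: iterating it along the continued-fraction expansion of $x$ relates the oscillation of $R_\alpha$ at scale $\delta$ near $x$ to its oscillation at a coarser scale with a definite scaling factor. This self-similarity yields the pointwise H\"older exponent as a function of the Diophantine type of $x$, and in particular the uniform bound that forces non-differentiability at every irrational when $\alpha<\frac52$. The value $\alpha=2$ is where Gerver's exact cancellation occurs: at $x=a/q$ with $4\mid q-2$ the leading and subleading terms of the renormalization cancel and a genuine derivative survives, whereas at all other points the cancellation fails and only H\"older continuity remains. To reach the graph dimension $\frac94-\frac\alpha2$ and the multifractal statement I would pass to a Littlewood--Paley/wavelet decomposition: the wavelet coefficients of $R_\alpha$ are again controlled by Gauss sums, the local scaling exponent is read off from their decay, and the Legendre transform (multifractal formalism) of the dimension-of-level-sets function produces simultaneously the Hausdorff dimension of the graph and, at $\alpha=2$, the everywhere-discontinuous H\"older exponent together with the unbounded variation.

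The main obstacle is this last step. The convergence and $L^p$ parts are essentially a careful accounting of one singularity type summed over the rationals, but parts (4)--(5) demand control of the entire hierarchy of scales at once and, for the Hausdorff dimension, estimates on the measure and dimension of the sets of $x$ with prescribed Diophantine approximation properties. Upgrading the easy upper bound for the box dimension to a matching lower bound for the Hausdorff dimension is the delicate point: it requires a mass-distribution (Frostman) argument on a carefully constructed Cantor-like set of resonant points, together with a verification that the wavelet leaders genuinely obey the multifractal formalism, which is not automatic and historically needed the full strength of Jaffard's and Chamizo--C\'ordoba's analyses.
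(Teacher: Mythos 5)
The paper never proves this theorem: it is quoted verbatim as a survey of classical results, with the ``proof'' consisting of the citations to Hardy, Hardy--Littlewood, Gerver and Chamizo--C\'ordoba (plus Jaffard for the multifractal statement). So the only meaningful comparison is with those original arguments, and measured against them your architecture is essentially the historically correct one: the theta modular transformation with the quadratic Gauss sum $G(a,q)$ as local amplitude is exactly the Hardy--Littlewood mechanism; viewing $R_{\alpha}$ as a fractional integral of the boundary value of $\Theta$ is precisely how Chamizo and C\'ordoba obtain the $L^{p}$ range $p<\frac{2}{1-\alpha}$, the convergence criterion $q\equiv 2 \pmod 4$ (which after rescaling matches Gerver's ``numerator and denominator both odd'' condition), and the dimension formula; and the wavelet route for part (5) is Jaffard's. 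Two local slips: at a rational $x=2\pi a/q$ with $G(a,q)\neq 0$ the incomplete sums $\sum_{n\leq N}e^{in^{2}x}$ grow linearly, so the weighted partial sums grow like $N^{1-\alpha}$, not $N^{1/2-\alpha}$; and the divergence at \emph{irrational} $x$ for $\alpha<\frac12$ is not handled by a van der Corput remainder bound --- it needs the continued-fraction renormalization carried out quantitatively, as in Hardy's theta asymptotics.

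The genuine gap sits in part (4), and it is the one you half-acknowledge at the end: your plan promises the \emph{Hausdorff} dimension of the graph via a Frostman mass-distribution argument on a Cantor-like resonant set, but no such argument is known to succeed here. What Chamizo--C\'ordoba actually prove is the box (Minkowski) dimension $\frac94-\frac{\alpha}{2}$; the Hausdorff dimension of these graphs is not established in the cited literature, so this step of your program cannot be closed --- though in fairness the paper's own wording ``Hausdorff dimension'' already overstates what its references contain. Beyond that, the remaining items of (4)--(5) are in your sketch a roadmap rather than a proof: nowhere-differentiability on the full range $\alpha<\frac52$ requires uniform control of the renormalization at every irrational, and differentiability of $R_{2}$ exactly at $q\equiv 2\pmod 4$ requires Gerver's (or Duistermaat's) exact second-order cancellation, which you assert but do not derive. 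As a reconstruction of the cited proofs your proposal is faithful in outline; as a self-contained proof it leaves precisely the historically hard steps open.
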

In the following, we aim to give a similar description for our function series. Let us start with some preliminary definitions which are necessary for what follows:
\subsection{Preliminary definitions}
We call a function $f: \mathbb{R} \rightarrow \mathbb{C}$ \emph{locally H\"{o}lder continuous} at $x_0 \in \mathbb{R}$, if there exist $s \in (0,1]$ and $C,\epsilon >0$ such that 
$$\left|f(x)-f(y)\right|\leq C\left|x-y\right|^{s}, \quad \mbox{for all}\;x,y \in B_{\epsilon}(x_0).$$
We call the supremum of $s$ for which these inequality holds at $x_0$ the \emph{local H\"{o}lder exponent}. \\
Let $\phi: \mathbb{R} \rightarrow \mathbb{C}$ be a smooth function with compact support $\supp(\phi) \subset \mathbb{C}$. We write 
$$\hat{\phi}(u)=\int_{\mathbb{R}} \phi(t)\exp(-iut) dt$$
for the \emph{Fourier transform} of $\phi$. Further, let $\phi: \mathbb{R} \rightarrow \mathbb{R}$ be given such that the support of its Fourier transform is contained in $[-1,1]$, then the \emph{Gabor wavelet transform} of a function $f: \mathbb{R} \rightarrow \mathbb{C}$ is defined by 
$$G(a,b,\lambda) = \frac{1}{a}\int_{\mathbb{R}}f(t)\exp(-i\lambda t)\phi\left(\frac{t-b}{a}\right)dt.$$
With these notation, we have the following estimation which is a special case of Proposition 5 in \cite{jaffard2010}: 
\begin{prop}[Jaffard]\label{prop:jaffard}
Let $f:\mathbb{R}\rightarrow \mathbb{C}$ be a bounded function. Let $G(a,b,\lambda)$ be the Gabor wavelet transform of $f$. If $f$ is locally H\"{o}lder continuous at $x_0 \in \mathbb{R}$ with H\"older coefficient $s$, then there exists $C > 0$ such that for all $a \in (0,1]$ and for all $b \in \overline{B_{1}(x_0)}$ and for all $\lambda \geq a^{-1}$ we have
$$\left|G(a,b,\lambda)\right| \leq Ca^{s}\left(1 + \frac{\left|x_0 -b\right|}{a}\right)^{s}.$$
\end{prop}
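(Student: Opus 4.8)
The plan is to reduce the claim to an estimate on a rescaled integral, exploit the band-limited support of $\hat\phi$ to introduce a cancellation, and then split the integration region according to whether the local H\"older hypothesis is available. First I would substitute $u=(t-b)/a$ in the definition of $G$, obtaining
$$G(a,b,\lambda)=\exp(-i\lambda b)\int_{\mathbb{R}}f(b+au)\exp(-i\lambda a u)\,\phi(u)\,du,$$
so that $|G(a,b,\lambda)|=\left|\int_{\mathbb{R}}f(b+au)\exp(-i\lambda a u)\phi(u)\,du\right|$. The key point is the cancellation $\int_{\mathbb{R}}\phi(u)\exp(-i\lambda a u)\,du=\hat\phi(\lambda a)=0$, which holds because $\lambda\geq a^{-1}$ forces $\lambda a\geq 1$, while $\hat\phi$ is continuous with support in $[-1,1]$ (so it vanishes at the endpoint by continuity). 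Subtracting the constant $f(x_0)$, whose contribution vanishes by this identity, yields
$$|G(a,b,\lambda)|\leq \int_{\mathbb{R}}\left|f(b+au)-f(x_0)\right|\,|\phi(u)|\,du.$$

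Next I would record the algebraic simplification $a^{s}\left(1+|x_0-b|/a\right)^{s}=(a+|x_0-b|)^{s}$, so it suffices to bound the last integral by $C(a+|x_0-b|)^{s}$. I would split the domain into the near region $N=\{u:\,|b+au-x_0|<\epsilon\}$ and its complement. On $N$ both $b+au$ and $x_0$ lie in $B_\epsilon(x_0)$, so the local H\"older estimate applies and gives $|f(b+au)-f(x_0)|\leq C|b+au-x_0|^{s}\leq C a^{s}\left(|x_0-b|/a+|u|\right)^{s}$. Writing $\rho=|x_0-b|/a$ and using the subadditivity $(\rho+|u|)^{s}\leq (1+\rho)^{s}\max(1,|u|^{s})$ for $s\in(0,1]$, together with the integrability of $(1+|u|)^{s}|\phi(u)|$ coming from the Schwartz decay of the band-limited window $\phi$, the near-region contribution is bounded by $Ca^{s}(1+\rho)^{s}=C(a+|x_0-b|)^{s}$, exactly the target.

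Finally, on the far region I would use only the boundedness of $f$, giving $|f(b+au)-f(x_0)|\leq 2\|f\|_\infty$, and the decay of $\phi$; here the target $(a+|x_0-b|)^{s}\geq a^{s}$ is what must be beaten. If $|x_0-b|\geq\epsilon/2$ the target exceeds a fixed positive constant while $\int|\phi|\,du$ is finite, so the estimate is immediate. If $|x_0-b|<\epsilon/2$, then on the far region $a|u|\geq\epsilon/2$, i.e.\ $|u|\geq\epsilon/(2a)$, and the rapid decay $|\phi(u)|\leq C_N|u|^{-N}$ gives $\int_{|u|\geq\epsilon/(2a)}|\phi(u)|\,du\leq C a^{N-1}\leq Ca^{s}$ for any $N\geq 1+s$ (e.g.\ $N=2$, since $0<a\leq 1$). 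I expect the main obstacle to be precisely this far-region bookkeeping: the H\"older hypothesis is purely local, so all decay there must be supplied by $\phi$, and one has to track how the cutoff radius $\epsilon/(2a)$ interacts with $a\to 0$ in order to land on the exponent $s$ rather than a larger one. Combining the two regions yields $|G(a,b,\lambda)|\leq C(a+|x_0-b|)^{s}=Ca^{s}\left(1+|x_0-b|/a\right)^{s}$, which is the asserted inequality.
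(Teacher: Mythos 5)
Your proof is correct, but note that the paper contains no proof of this proposition to compare against: it is quoted verbatim as ``a special case of Proposition 5 in \cite{jaffard2010}'' and used as a black box. Your argument is in fact the standard mechanism behind Jaffard's estimate, reconstructed correctly: the substitution $u=(t-b)/a$ absorbs the $1/a$ normalization; the band-limitation gives $\hat\phi(\lambda a)=0$ for $\lambda a\geq 1$ (and your observation that the endpoint case $\lambda a=1$ is covered because a continuous $\hat\phi$ with $\supp(\hat\phi)\subset[-1,1]$ must vanish at $\pm 1$ is exactly the detail needed, since the hypothesis only guarantees $\lambda a\geq 1$, not $>1$); this permits subtracting $f(x_0)$ for free, after which the near/far splitting at $|b+au-x_0|=\epsilon$ correctly separates where the \emph{local} H\"older bound is usable from where only boundedness of $f$ and decay of $\phi$ are available. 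The elementary inequality $(\rho+|u|)^{s}\leq(1+\rho)^{s}\max(1,|u|^{s})$ holds for $s\in(0,1]$, the identity $a^{s}(1+|x_0-b|/a)^{s}=(a+|x_0-b|)^{s}$ is right, and your far-region bookkeeping ($|x_0-b|\geq\epsilon/2$ handled by bounding the target below by a constant; otherwise $|u|\geq\epsilon/(2a)$ and $\int_{|u|\geq\epsilon/(2a)}|u|^{-N}du\leq Ca^{N-1}\leq Ca^{s}$ for $N\geq 1+s$, $a\leq 1$) lands exactly on the claimed exponent. Two small points deserve flagging. First, the decay $|\phi(u)|\leq C_N|u|^{-N}$ is not literally available from the paper's stated hypotheses, which are actually inconsistent (a nonzero smooth $\phi$ cannot have compact support \emph{and} $\supp(\hat\phi)\subset[-1,1]$); in Jaffard's setting $\phi$ is a Schwartz function with band-limited Fourier transform, which is the reading your proof needs, so you should state that you are using Schwartz decay rather than deriving it from band-limitation alone. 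Second, in the near region your ``e.g.\ $N=2$'' would not suffice at the endpoint $s=1$, where $\int(1+|u|)^{s}|u|^{-2}du$ diverges; since you have all $N$ at your disposal under the Schwartz assumption this is cosmetic, but the quantifier should be placed on the near-region integrability as well, not only on the far-region tail.
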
 
  
\subsection{Differentiability of $V_{\alpha,\beta}$}
In the spirit of the results in \S~\ref{sec:historical}, we aim to determine which conditions have to be fulfilled by the coefficients and frequencies in our example in order to have a certain degree of differentiability.
Firstly, we consider 
$$V_{\beta}(n,t)=\sum_{p \leq n}f(p) \cos(2\pi p^{\beta} t), \quad \beta >0,$$
where $f$ is any function of prime numbers. We can state the trivial fact that 
\begin{prop}
For any $\beta \geq 0$, if $\int_{2}^{\infty}\frac{\left|f(x)\right|}{\ln(x)}dx < \infty$, then the partial sums $V_{\beta}(n,t)$ converges uniformly and absolutely to a continuous function denoted by $V_{\beta}$. 
\end{prop}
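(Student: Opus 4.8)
The plan is to reduce the statement to the absolute convergence of the numerical series $\sum_p |f(p)|$ and then to extract that convergence from the integral hypothesis by means of the prime number theorem. First I would invoke the Weierstrass $M$-test: since $|\cos(2\pi p^{\beta} t)|\le 1$ for every $t$, each summand is bounded uniformly in $t$ by $M_p := |f(p)|$, and this bound is completely independent of $\beta$. Hence if $\sum_p |f(p)| < \infty$, the series $\sum_p f(p)\cos(2\pi p^{\beta} t)$ converges absolutely and uniformly on $[0,1)$. As each partial sum $V_{\beta}(n,\cdot)$ is a finite sum of continuous functions, and a uniform limit of continuous functions is continuous, the limit $V_{\beta}$ is continuous. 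This disposes of every assertion except the summability $\sum_p|f(p)|<\infty$, so the entire content of the proposition lies in the latter.

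To establish summability I would pass from the prime sum to the weighted integral through the density of the primes. Writing $\pi(x)=\#\{p\le x\}$ for the prime counting function, Abel (partial) summation expresses the prime sum as a Riemann--Stieltjes integral,
$$\sum_{p\le n}|f(p)| = \int_{2^-}^{n} |f(x)|\,d\pi(x).$$
The prime number theorem in the form $\pi(x)=\li(x)+E(x)$, with $\li(x)=\int_2^x \tfrac{dt}{\ln t}$ and $E(x)=o(x/\ln x)$, then lets me split $d\pi(x)=\tfrac{dx}{\ln x}+dE(x)$. The main term is exactly $\int_2^n \tfrac{|f(x)|}{\ln x}\,dx$, which is bounded uniformly in $n$ by hypothesis and whose limit as $n\to\infty$ is the finite integral appearing in the statement. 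It remains only to show that the contribution of the error measure $dE$ stays bounded.

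The error term is where the real work sits. Integrating by parts gives $\int_{2}^{n}|f(x)|\,dE(x)=E(n)|f(n)|-E(2)|f(2)|-\int_2^n E(x)\,d|f(x)|$, so I must control both the boundary term $E(n)|f(n)|$ and the integral against $d|f|$ using $E(x)=o(x/\ln x)$. For the functions of primary interest, namely $f(x)=x^{-\alpha}$ (and more generally any eventually monotone $|f|$ of bounded variation) these are routine: monotonicity turns the vanishing tail of the convergent integral $\int^{\infty}\tfrac{|f|}{\ln x}\,dx$ into the decay $\li(n)|f(n)|\to 0$, which in turn forces $E(n)|f(n)|\to 0$, while $\int E\,d|f|$ is dominated by $o(x/\ln x)$ integrated against the total variation of $f$. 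The main obstacle is thus not the harmonic-analysis part, which is the elementary $M$-test, but the comparison of the discrete prime sum with its continuous integral surrogate: it genuinely requires the prime number theorem (Chebyshev-type bounds $\pi(x)\asymp x/\ln x$ already suffice for mere finiteness) together with enough regularity of $f$ to make partial summation effective. Since the exponent $\beta$ never enters beyond the bounded cosine factor, the conclusion holds for every $\beta\ge 0$ simultaneously.
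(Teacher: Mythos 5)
Your proposal follows the same route as the paper's own proof: the Weierstrass $M$-test (using $\lvert\cos(2\pi p^{\beta}t)\rvert\le 1$) reduces everything to $\sum_p\lvert f(p)\rvert<\infty$, which is then compared with the integral via a Riemann--Stieltjes integral against $d\pi(x)$ and the prime number theorem. The only difference is that you carry out the error-term analysis (writing $\pi=\li+E$, integrating by parts, and invoking regularity of $f$) which the paper compresses into the single asserted identity $\sum_p f(p)=\int_2^{\infty}f(x)/\ln(x)\,dx$; your version is in fact the more careful one, since that identity holds only asymptotically and under monotonicity- or bounded-variation-type assumptions on $f$ that the paper leaves implicit.
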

\begin{proof}
We have $\left|f(p) \cos(2\pi p^{\beta} t)\right| \leq \left|f(p)\right|$ for all $p$. By the Weierstrass $M$-test the partial sums $V_{\beta}(n,t)$ converges uniformly and absolutely if $\sum_{p}\left|f(p)\right| < \infty.$ Using the Riemann-Stieltjes Integral and the Prime number theorem we get
$$\sum_{p} f(p) = \int_{2}^{\infty}f(x)d\pi(x) = \int_2^{\infty}\frac{f(x)}{\ln(x)}dx,$$ 
where $\pi(x)$ denotes the number of primes $\leq x$ finishing the proof.
\end{proof}
We take now $$V_{\alpha,\beta}(n,t)=\sum_{p\leq n} p^{-\alpha}\cos(2\pi p^{\beta}t)$$ and denote with $V_{\alpha,\beta}(t)$ its limit whenever it exists. Then one can show the following statement:
\begin{theorem} Let $\alpha \in \mathbb{R}$ and $\alpha > 1$. 
\begin{enumerate}
\item Then the series $V_{\alpha,\beta}(n,t)$ converges uniformly and absolutely to a continuous function $V_{\alpha,\beta}(t)$. 
\item For $m \geq 1$, if further $\alpha -m\beta > 1$, then the function $V_{\alpha,\beta}(t)$ is $C^m$, i.e. $m$ times continuously differentiable.
\end{enumerate}
\end{theorem}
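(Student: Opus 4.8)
The plan is to derive both statements from the Weierstrass $M$-test combined with the reduction of prime sums to integrals supplied by the preceding proposition, so that the only arithmetic input needed is the prime number theorem already used there. For part (1) I would apply that proposition directly with $f(x)=x^{-\alpha}$, which reduces the matter to verifying $\int_{2}^{\infty}\frac{x^{-\alpha}}{\ln x}\,dx<\infty$. Since $\ln x\geq 1$ for $x\geq e$, the integrand is bounded above by $x^{-\alpha}$ on $[e,\infty)$, and $\int_{e}^{\infty}x^{-\alpha}\,dx$ converges precisely because $\alpha>1$, while the integral over $[2,e]$ is plainly finite. Hence $\sum_{p}p^{-\alpha}<\infty$, the $M$-test applies, and $V_{\alpha,\beta}(n,t)$ converges uniformly and absolutely to a function $V_{\alpha,\beta}(t)$ that is continuous as a uniform limit of continuous partial sums.

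For part (2) the first step is to differentiate the general term $j$ times. Each differentiation in $t$ contributes a factor $2\pi p^{\beta}$ and shifts the phase by $\tfrac{\pi}{2}$, so that
$$\frac{d^{j}}{dt^{j}}\Bigl(p^{-\alpha}\cos(2\pi p^{\beta}t)\Bigr)=(2\pi)^{j}\,p^{\,j\beta-\alpha}\,\cos\!\left(2\pi p^{\beta}t+\tfrac{j\pi}{2}\right),$$
whose absolute value is bounded by $(2\pi)^{j}\,p^{-(\alpha-j\beta)}$. Because $\beta>0$, the exponent $\alpha-j\beta$ is decreasing in $j$, so the hypothesis $\alpha-m\beta>1$ forces $\alpha-j\beta>1$ for every $j\in\{0,1,\dots,m\}$. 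Applying the argument of part (1) to $f(x)=x^{-(\alpha-j\beta)}$ then gives $\sum_{p}p^{-(\alpha-j\beta)}<\infty$, and consequently the series of $j$-th derivatives converges uniformly and absolutely by the $M$-test, for each such $j$.

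The final step is an iterated use of the classical theorem stating that if a series of $C^{1}$ functions converges at one point and the differentiated series converges uniformly, then the original series is differentiable with derivative equal to the termwise sum of derivatives. Starting from the convergent series $V_{\alpha,\beta}$ of part (1) together with the uniformly convergent series of first derivatives, this yields that $V_{\alpha,\beta}$ is differentiable with continuous derivative. Repeating the argument to pass from the series of $j$-th derivatives to that of $(j+1)$-th derivatives for $j=0,1,\dots,m-1$ raises the order of differentiability one step at a time, and since every derivative series is a uniform limit of continuous functions, all derivatives up to order $m$ are continuous; hence $V_{\alpha,\beta}\in C^{m}$.

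I do not expect a genuine obstacle here: the theorem sits in the tame regime where the coefficient decay dominates the frequency growth, and everything collapses to the $M$-test. The only point requiring care is the convergence of the prime sum $\sum_{p}p^{-(\alpha-j\beta)}$, which is exactly where the prime number theorem enters through the preceding proposition. The real analytic difficulties of the paper — nowhere differentiability, sharp H\"older exponents and fractal dimension — surface only in the critical regime $\alpha-m\beta\leq 1$, which lies outside the scope of this statement.
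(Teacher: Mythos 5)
Your proposal is correct and follows essentially the same route as the paper: the Weierstrass $M$-test applied to $\sum_p p^{-(\alpha-j\beta)}$ plus iterated term-by-term differentiation, with the only cosmetic difference that you verify $\sum_p p^{-\alpha}<\infty$ through the preceding proposition's integral criterion, whereas the paper cites the known convergence of the prime zeta function $P(\alpha)$ for $\alpha>1$. Your explicit observation that $\alpha-m\beta>1$ forces $\alpha-j\beta>1$ for all intermediate $j$ makes the induction the paper merely gestures at fully precise.
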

\begin{proof}
For the first result we use the properties of the prime zeta function $P(\alpha)=\sum_{p}p^{-\alpha}$: it converges absolutely for $\alpha >1, \alpha \in \mathbb{R},$ and diverges for $\alpha= 1$ (see e.g. \cite{landau_walfisz1920},\cite{froeberg1968}). The coefficients $p^{-\alpha}$ are an upper bound for the terms $p^{-\alpha}\cos\left(2\pi p^{\beta}t\right)$. Consequently, the Weierstrass $M$-test implies that for $\alpha > 1$ and any $t \in [0,1)$, $V_{\alpha,\beta}(n,t)$ converges uniformly and absolutely to $V_{\alpha,\beta}(t)$. As any partial sum $V_{\alpha,\beta}(n,t)$ is continuous, the limit is a continuous function, too. \\
Secondly, for any $n$ and $t$ we can differentiate the partial sums 
\begin{align*}
V'_{\alpha,\beta}(n,t)&=-2\pi \sum_{p\leq n} p^{-\alpha+\beta}\sin(2\pi p^{\beta} t). 
\end{align*} 
This sequence of derivatives converges uniformly with the same argument as above for $\alpha-\beta> 1$, so that one concludes that $V_{\alpha,\beta}(t)$ is continuously differentiable itself with derivative $V'_{\alpha,\beta}(t)=-2\pi \sum_{p} p^{-\alpha+\beta}\sin(2\pi p^{\beta} t)$. By induction over $m$, one proves the $m$-time differentiability of the function. 
\end{proof}
\begin{rem}
The result is in accordance with the intuitive smoothness of the series: for fixed $\alpha > 1$, the series gets the smoother, the smaller the frequency $p^{\beta}$, $\beta \rightarrow 0$, or equivalently, the larger the period. Therefore, the peaks and sinks of the oscillation are more and more separated so that the series gets smoother (see Fig.~\ref{fig:regularity_1}-\ref{fig:regularity_3}).
\end{rem}

\begin{figure}
\includegraphics[width=\textwidth]{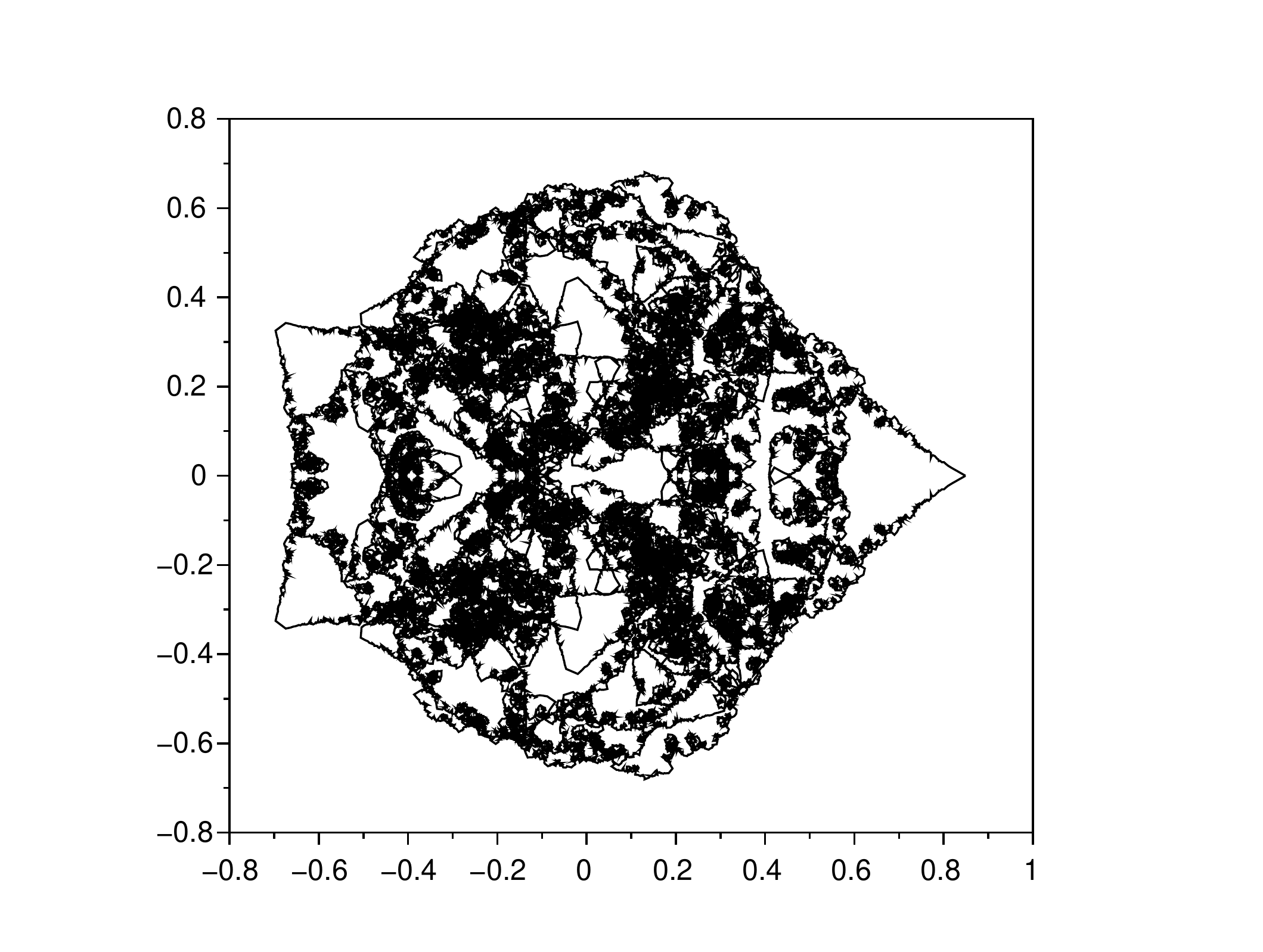}
\caption{Graph of $V_{1.5,2}(10^5,t)$ at $5*10^4$ discrete points in each direction (interpolated). } 
\label{fig:regularity_1}
\end{figure}

\begin{figure}
\includegraphics[width=\textwidth]{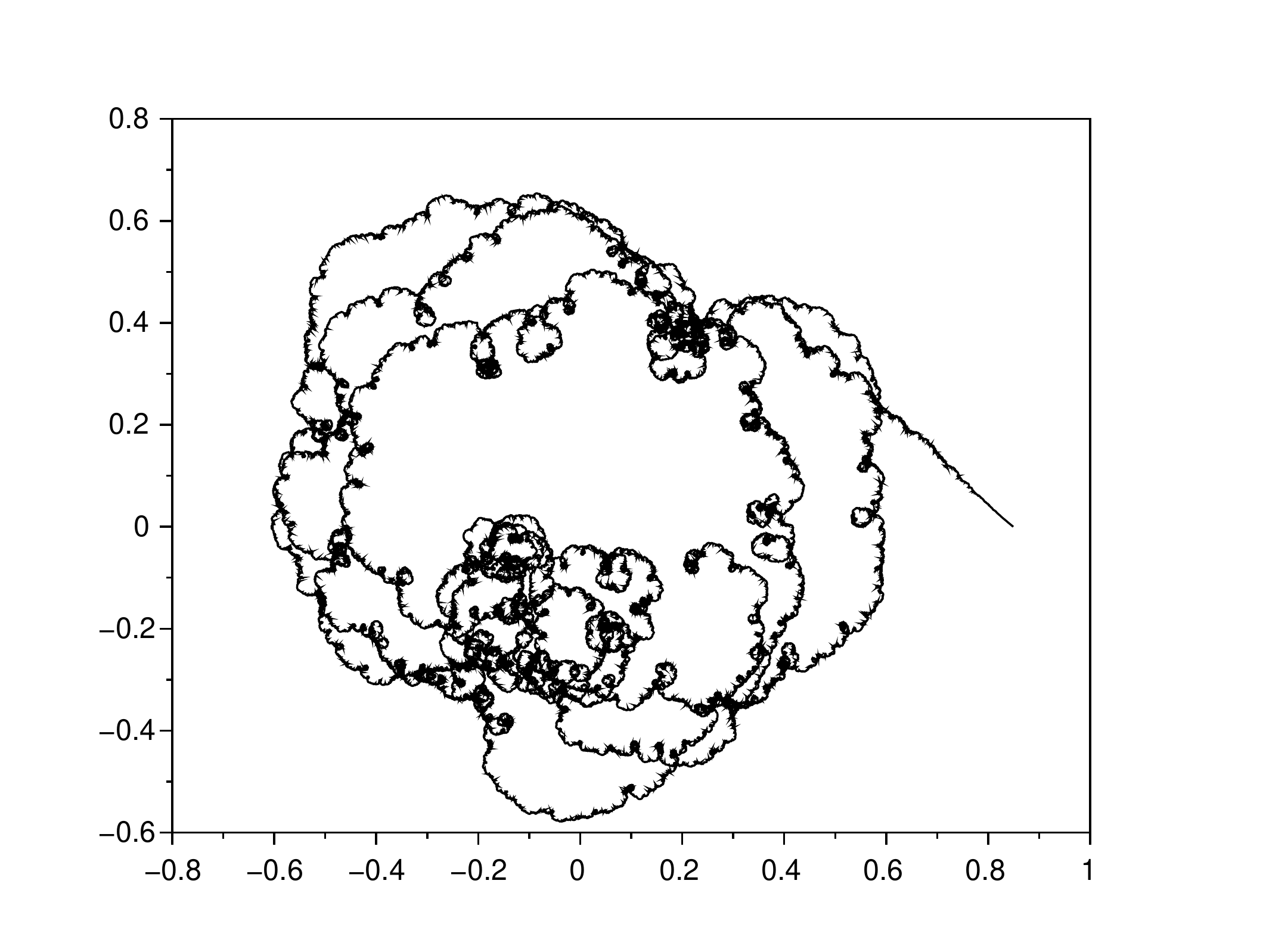}
\caption{Graph of $V_{1.5,1.5}(10^5,t)$ at $5*10^4$ discrete points in each direction (interpolated).} 
\label{fig:regularity_2}
\end{figure}

\begin{figure}
\includegraphics[width=\textwidth]{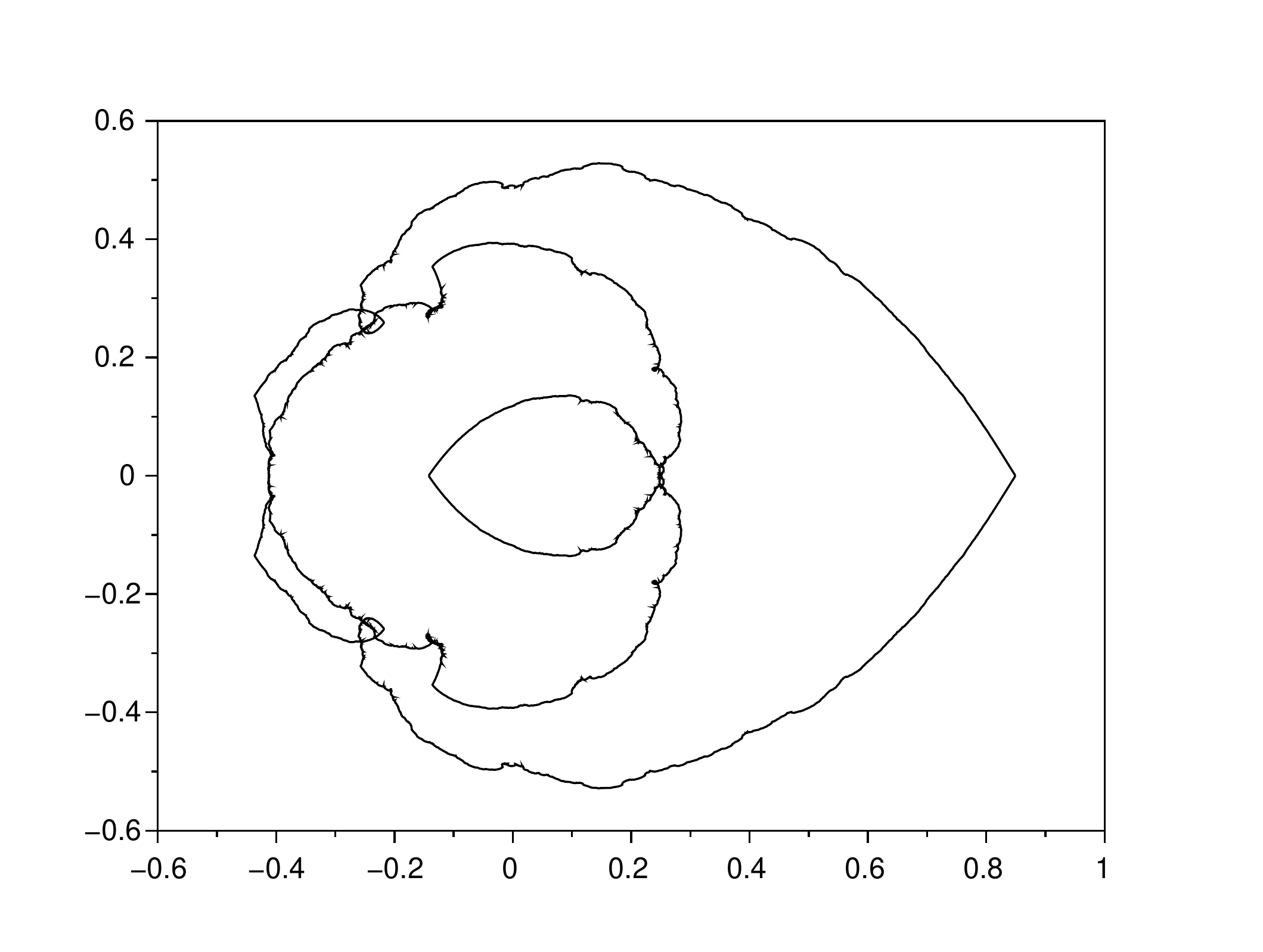}
\caption{Graph of $V_{1.5,1}(10^5,t)$ at $5*10^4$ discrete points in each direction (interpolated).} 
\label{fig:regularity_3}
\end{figure}

\begin{theorem}
If $1 < \alpha \leq \beta +1$, then the function is H\"{o}lder continuous with H\"{o}lder coefficient $ s\leq \frac{\alpha}{\beta}$.
\end{theorem}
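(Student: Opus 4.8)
The plan is to use Proposition~\ref{prop:jaffard} in its contrapositive form: instead of deducing decay of the Gabor wavelet transform from Hölder regularity, I would exhibit a scale at which the transform of $V_{\alpha,\beta}$ is \emph{large}, and then read off an upper bound on the admissible local Hölder exponent. The hypothesis $1<\alpha\le\beta+1$ places us exactly in the regime where the previous theorem no longer guarantees $C^1$-smoothness (which needed $\alpha-\beta>1$), so an upper bound on regularity is the natural statement. Concretely, I would insert $V_{\alpha,\beta}(t)=\sum_p p^{-\alpha}\cos(2\pi p^\beta t)$ into the definition of $G(a,b,\lambda)$, split the cosine as $\tfrac12(e^{2\pi i p^\beta t}+e^{-2\pi i p^\beta t})$, and evaluate each integral by the substitution $u=(t-b)/a$, so that every prime contributes a shifted, modulated copy of $\hat\phi$:
\[
G(a,b,\lambda)=\tfrac12\sum_p p^{-\alpha}\Big(e^{i(2\pi p^\beta-\lambda)b}\,\hat\phi\big((\lambda-2\pi p^\beta)a\big)+e^{-i(2\pi p^\beta+\lambda)b}\,\hat\phi\big((\lambda+2\pi p^\beta)a\big)\Big).
\]

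Since $\supp\hat\phi\subset[-1,1]$ and $\lambda\ge a^{-1}>0$, the second term vanishes (it would force $|\lambda+2\pi p^\beta|\le a^{-1}$, impossible for positive frequencies), and the first survives only for primes whose frequency lies within $a^{-1}$ of $\lambda$. I would then centre the window on a prime frequency, taking $\lambda=2\pi p_0^\beta$ and the matched scale $a=\lambda^{-1}=(2\pi p_0^\beta)^{-1}\asymp p_0^{-\beta}$. This is precisely the scaling that converts the coefficient into a power of the scale, $p_0^{-\alpha}=(p_0^\beta)^{-\alpha/\beta}\asymp a^{\alpha/\beta}$, and the $p_0$-term equals $\tfrac12 p_0^{-\alpha}\hat\phi(0)$. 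If one can show $|G(a,x_0,\lambda)|\gtrsim p_0^{-\alpha}$, then feeding this into Proposition~\ref{prop:jaffard} with $b=x_0$ (so the bracketed factor is $1$) gives $c\,p_0^{-\alpha}\le C a^{s}\asymp C p_0^{-\beta s}$ along the sequence $p_0\to\infty$, and dividing by $\ln p_0$ forces the local Hölder exponent at $x_0$ to satisfy $s\le\alpha/\beta$.

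The hard part is securing the lower bound $|G(a,x_0,\lambda)|\gtrsim p_0^{-\alpha}$ at the matched scale, because the window of width $\asymp p_0^\beta$ now admits \emph{all} primes $p\lesssim p_0$ and their phases could conspire to cancel the isolated $p_0$-contribution. Isolating a single prime is not an option: consecutive primes near $p_0$ are $\asymp\ln p_0$ apart, so their frequencies are separated by only $\asymp p_0^{\beta-1}\ln p_0$, and isolation would demand the coarser scale $a\asymp(p_0^{\beta-1}\ln p_0)^{-1}$, yielding merely the weaker exponent $\alpha/(\beta-1)$. Instead I would run a second-moment argument in the window centre: expand $\int_0^1|G(a,b,\lambda)|^2\,db$, whose diagonal is a positive sum that is $\gtrsim p_0^{-2\alpha}$ (the small primes being suppressed by the decay of $\hat\phi$ near the edge of its support), while the off-diagonal is governed by $\int_0^1 e^{2\pi i(p^\beta-q^\beta)b}\,db=O(|p^\beta-q^\beta|^{-1})$. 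Bounding the double sum $\sum_{p\ne q}p^{-\alpha}q^{-\alpha}|p^\beta-q^\beta|^{-1}$, which for $\beta>1$ is dominated by the diagonal once prime-gap estimates and the prime number theorem are invoked, would produce a point $x_0$ with $|G(a,x_0,\lambda)|\gtrsim p_0^{-\alpha}$ and hence $s\le\alpha/\beta$. Making this off-diagonal estimate rigorous, uniformly as $p_0\to\infty$ and without losing the logarithmic factors, is where I expect the real work to lie — and it is precisely the cancellation in prime exponential sums that the paper probes numerically in \S\ref{sec:random}.
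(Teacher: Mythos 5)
Your overall strategy---reading an upper bound on the H\"older exponent off a large Gabor coefficient via Proposition~\ref{prop:jaffard}---is the same as the paper's, but the proposal has two genuine gaps. The first is that you prove at most half of the statement. The theorem asserts that $V_{\alpha,\beta}$ \emph{is} H\"older continuous, and the paper spends the first half of its proof on exactly this: it splits the series at $N=\left|t-t_0\right|^{-1/\alpha}$, applies the mean value theorem to the primes $p\leq N$ (costing a factor $p^{\beta}$, and estimated via the Riemann--Stieltjes integral against $\pi(x)$), and bounds the tail $\sum_{p>N}p^{-\alpha}=\mathcal{O}\left(N^{-\alpha}\right)$ using the explicit formula for the prime zeta function, arriving at $\left|V_{\alpha,\beta}(t)-V_{\alpha,\beta}(t_0)\right|\leq C\left|t-t_0\right|^{2-\frac{\beta+1}{\alpha}}$. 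Your proposal never addresses this half, and it is not optional even for your own argument: Proposition~\ref{prop:jaffard} takes local H\"older continuity as a \emph{hypothesis}, so without part one your contrapositive only yields the conditional ``if H\"older with exponent $s$, then $s\leq\alpha/\beta$'' and the continuity claim is left unproven.

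The second gap is that the pivotal step of your replacement argument---the second-moment lower bound $\left|G(a,x_0,\lambda)\right|\gtrsim p_0^{-\alpha}$ at the matched scale $a\asymp p_0^{-\beta}$---is only sketched, and as sketched it cannot cover the stated range of parameters. You yourself restrict the off-diagonal domination to $\beta>1$, but the hypothesis $1<\alpha\leq\beta+1$ permits any $\beta\geq\alpha-1$, for instance $\alpha=1.2$, $\beta=0.7$; there the frequency gaps $\left|p^{\beta}-q^{\beta}\right|\asymp p^{\beta-1}\left|p-q\right|$ shrink, the oscillatory bound $\int_0^1 e^{2\pi i(p^{\beta}-q^{\beta})b}\,db=\mathcal{O}\left(\left|p^{\beta}-q^{\beta}\right|^{-1}\right)$ no longer beats the diagonal, and the argument collapses. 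In addition, the point $x_0$ your averaging over $b$ produces depends on $p_0$, so what you bound is the uniform H\"older exponent of $V_{\alpha,\beta}$, not ``the local H\"older exponent at $x_0$'' as phrased; for the global statement of the theorem this is reparable, but the quantifiers as written are off. For comparison, the paper takes precisely the isolation route you dismiss: it evaluates the transform at scale $a=\theta_m^{-1}$ with $\theta_m=\min\left\{p_m^{\beta}-p_{m-1}^{\beta},\,p_{m+1}^{\beta}-p_m^{\beta}\right\}$, chooses $\supp(\hat{\phi})\subset[0,1]$ so that only the $m$-th term survives, obtains $p_m^{-\alpha}\leq C\theta_m^{-s}$, and passes to $s\leq\alpha/\beta$ via $\theta_m\leq p_m^{\beta}$. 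Your remark that isolation only yields $\alpha/(\beta-1)$ is in fact a sharp observation about that very step: since $\theta_m\asymp p_m^{\beta-1}\Delta_m$ is far smaller than $p_m^{\beta}$, the inequality $p_m^{-\alpha}\leq C\theta_m^{-s}$ honestly forces only $s\lesssim\alpha/(\beta-1)$, and replacing $\theta_m$ by its upper bound $p_m^{\beta}$ weakens rather than strengthens the conclusion (this is visible in the paper's own commented-out computation, which ends at $s\leq 2\alpha/(\beta-1)$). So your matched-scale idea, if completed, would not merely reproduce the paper's proof but repair its weakest link---which is exactly why the missing off-diagonal estimate cannot be waved away: the entire content of the bound $s\leq\alpha/\beta$ now resides in it.
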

\begin{proof}
First of all, let $f:\mathbb{R} \rightarrow \mathbb{C}$ be an integrable function and $N >0$, then by using the Riemann-Stieltjes integral (see e.g. \cite{rosser1962}) and the Prime number theorem as above one knows
$$\sum_{p \leq N} p^{-\alpha} = \int_2^{N}x^{-\alpha}d\pi(x) \sim \int_2^N \frac{1}{x^{\alpha}\ln(x)}dx.$$
From this formula and $\li(x)$ denoting the logarithmic integral function, one deduces (substituting $dx$ by $d\left(x^{1-\alpha}\right)$) for $\alpha < 1$ 
$$\sum_{p\leq N}p^{-\alpha} = (1-\alpha)^{-1}\li\left(N^{1-\alpha}\right) + \mathcal{O}\left(N^{1-\alpha}\exp(-c\sqrt{\ln(N)}\right).$$
Approximating the logarithmic integral this implies \begin{equation}\label{eq:first_term}\sum_{p \leq N} p^{-\alpha} \sim \frac{N^{1-\alpha}}{(1-\alpha)\ln(N)}.\end{equation}
If $\alpha > 1$, we have to use the explicit formula for the prime zeta function to get an estimate for the speed of convergence (see e.g. \cite{cohen2000} for a derivation of the formula). We then have by partial summation 
\begin{align*}
\sum_{p}p^{-\alpha} &= \sum_{p \leq N} p^{-\alpha} + \sum_{n=1}^{\infty}\frac{\mu(n)}{n}\ln\left(\zeta(N,\alpha n)\right), \quad \mbox{with}\\
\zeta(N,\alpha)&=\zeta(\alpha)\Pi_{p \leq N}\left(1-p^{-\alpha}\right),\\
\end{align*}
where $\zeta(\alpha)=\sum_{n=1}^{\infty}n^{-\alpha}$ denotes the \emph{Riemann zeta function} and $\mu$ the \emph{Moebius function}. So we get for the tail of the prime zeta function 
\begin{align}\label{eq:second_term}
\sum_{p> N}p^{-\alpha} &= \sum_{n=1}^{\infty}\frac{\mu(n)}{n}\ln\left(\zeta(N,\alpha n)\right) \quad \mbox{with}\\
\ln\left(\zeta(N,\alpha)\right) &= \mathcal{O}\left(N^{-\alpha}\right) \quad \mbox{and}\nonumber\\
\sum_{n=1}^{\infty}\frac{\mu(n)}{n} &= 0. \nonumber
\end{align}
Combining Eq.~(\ref{eq:first_term})-(\ref{eq:second_term}) on the asymptotic of the prime zeta function, we can estimate now the regularity of our function $V_{\alpha,\beta}(t)$. \\
For any $t,t_0 \in [0,1)$, we choose $N = \left|t-t_0\right|^{-\frac{1}{\alpha}}.$ Then we have with the mean value theorem and using the absolute convergence of the series
\begin{align*}
\left|V_{\alpha,\beta}(t)-V_{\alpha,\beta}(t_0)\right| &\leq \sum_{p \leq N} p^{-\alpha}\left|\cos(2\pi p^{\beta}t)-\cos(2\pi p^{\beta}t_0)\right| + 2\sum_{p > N} p^{-\alpha}\\
&\leq \sum_{p \leq N} p^{-\alpha + \beta}\left|t-t_0\right| + 2\sum_{p > N} p^{-\alpha}\\
&\leq \frac{N^{-\alpha + \beta +1}}{(\beta-\alpha +1)\ln(N)} \left|t - t_0\right| + 2C N^{-\alpha}\\
&\leq C \left|t-t_0\right|^{2-\frac{\beta+1}{\alpha}}. 
\end{align*}
The exponent $1-\beta < 2-\frac{\beta +1}{\alpha}\leq 1$ is not necessarily optimal, but a lower bound. But it suffices to conclude that the function is H\"older continuous so that we can derive an upper bound for its H\"older exponent: \\For this step, we use a method developed by Jaffard in \cite{jaffard2010} which relies on a wavelet transform and the idea to choose the wavelet transform such that only one frequency of $V_{\alpha,\beta}(t)$ is picked up. Let $\theta_m= \min\left\{p_m^{\beta}-p_{m-1}^{\beta},p_{m+1}^{\beta}-p_m^{\beta}\right\}$ and $\Delta_m=p_m-p_{m-1}$. \\
We choose a function $\phi$ whose Fourier transform $\hat{\phi}$ has compact support $\supp(\hat{\phi}) \subset [0,1]$ and $\hat{\phi}(0)=1$. We then look at the Gabor-wavelet transform
\begin{align*}
G_m(\theta_m^{-1},t_0,p_m^{\beta})&=\theta_m \sum_{k}p_k^{-\alpha}\int_{\mathbb{R}}\exp\left(i\left(p_k^{\beta}-p_m^{\beta}\right)t\right)\phi\left(\theta_m(t-t_0)\right)dt\\
&=\sum_k p_k^{-\alpha} \exp\left(i\left(p_k^{\beta}-p_m^{\beta}\right)t_0\right) \int_{\mathbb{R}}\exp\left(i\frac{\left(p_k^{\beta}-p_m^{\beta}\right)}{\theta_m}\right)\phi(u)du, 
\end{align*}
with $u=\theta_m(t-t_0)$. Substituting $\hat{\phi}(y)= \int_{\mathbb{R}}\exp(iyu)\phi(u)du$ for $y=\frac{\left(p_k^{\beta}-p_m^{\beta}\right)u}{\theta_m}$ in the equation we get
$$G_m(\theta_m^{-1},t_0,p_m^{\beta}) = \sum_k p_k^{-\alpha}\exp\left(i\left(p_k^{\beta}-p_m^{\beta}\right)t_0\right)\hat{\phi}\left( \frac{\left(p_k^{\beta}-p_m^{\beta}\right)u}{\theta_m}\right).$$
As the support of $\hat{\phi}$ is a subset of the unit interval, it does vanish for any $k \neq m$, so the expression it reduced to
\begin{equation}\label{eq:hoelder}
G_m(\theta_m^{-1},t_0,p_m^{\beta})= p_m^{-\alpha}
\end{equation}
Recall that we have just proved that $V_{\alpha,\beta}$ is locally H\"{o}lder continuous at $t_0 \in \mathbb{R}$. Further, for all $m$ it is $p_m^{\beta} \geq \theta_m$ and $\theta_m^{-1} \in (0,1]$. Hence, applying Proposition~\ref{prop:jaffard}, there exists $C > 0$ such that for all $s \in (0,1)$
$$G_m(\theta_m^{-1},t_0,p_m^{\beta}) =p_m^{-\alpha}\leq C\theta_m^{-s}.$$
The gap $\theta_m$ is bounded by $p_m^{\beta}$ from above so that the H\"{o}lder coefficient $s$ is bounded by $\frac{\alpha}{\beta}$ from above finishing the proof.
\end{proof}
\begin{rem}
Let $\alpha>1$ be fixed. The bigger the gaps of the frequency, $\beta \rightarrow \infty$, the stronger the irregularity of $V_{\alpha,\beta}(t)$.
\end{rem}

\subsection{Self similarity and fractal dimension}
The graph of the function $V_{\alpha,\beta}$ seems to be self similar for certain $\alpha,\beta$. There seems to be an approximate scalar invariance a points $q^{-1}$, where $q$ is prime. Let us make more precise this intuition: look for example on the partial sums $V_{1,1}(n,t)=\sum_{p\leq n}p^{-1}\exp(2\pi i pt)$ in Fig.~\ref{fig:oneone}:
\begin{figure}
\includegraphics[width=\textwidth]{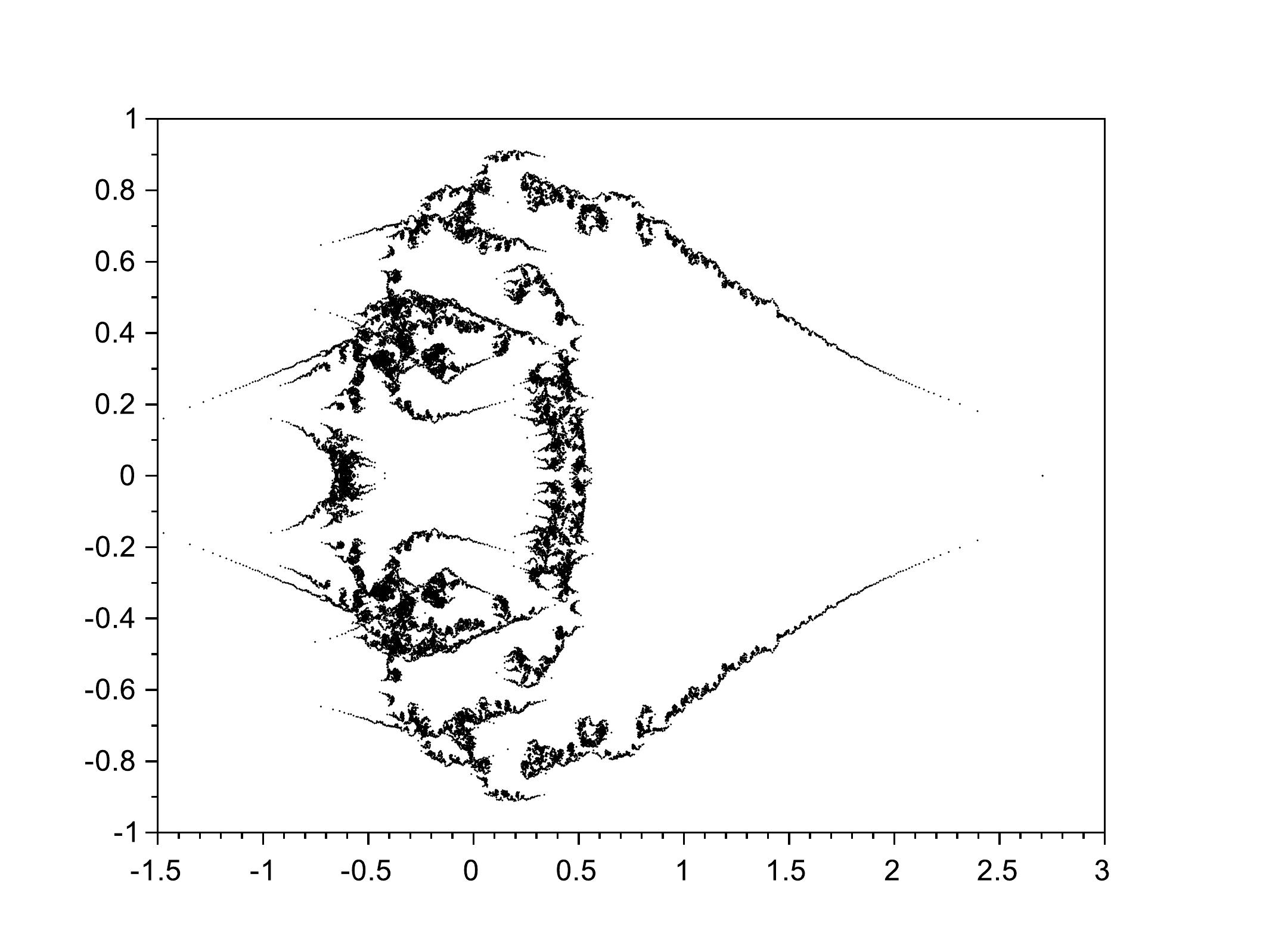}
\caption{Graph of $V_{1,1}(10^5,t)$ at $5*10^4$ discrete points.}
\label{fig:oneone}
\end{figure} 
Denote by $p_k$ the $k$th prime number. We restrict ourselves again to the real part of $V_{1,1}(n,t)$. The point $\frac{1}{2}$ is a global minimum as $V'_{1,1}\left(n,\frac{1}{2}\right)=0$ and $V_{1,1}\left(n,\frac{1}{2}\right)=\frac{1}{2}-\sum_{k=1}^np_k^{-1}$ as the primes greater than $2$ are odd. Now, consider the point $\frac{1}{3}$: we have $V_{1,1}\left(n,\frac{1}{3}\right)=\frac{1}{3} -\frac{1}{2}\sum_{k=1,k\neq 2}^np_k^{-1}$. More generally, one has
\begin{align*}
V_{1,1}\left(n,\frac{1}{q}\right)&=\frac{1}{q} + \sum_{l=1}^{q-1}\left(\cos\left(\frac{2\pi l}{q}\right)\sum_{p_k=l\mod q}^np_k^{-1}\right), \quad q\;\mbox{prime}\\
&= \sum_{l=0}^{q-1}\cos\left(\frac{2\pi l}{q}\right) R_{l,q}\\
\end{align*}
That is, we can decompose the partial sum into residue classes of the prime numbers and the roots of unity of cosine. 
One knows that the number of primes $p \leq n$ that are congruent to $ l \mod q$ are approximately the same for all $l$, that is, $\frac{n}{\Phi(q)\log(n)}$ where $\Phi(q)$ denotes the Euler totient function and is equal to $q-1$ for $q$ prime. So for any $\frac{1}{q}$, $q$ prime, one can use this distribution and the Riemann-Stieltjes integral to show that the difference between the sums $\sum_{p_k=l\mod q,\,p_k\leq n}p_k^{-1}$ for each $l=1,\dots,q-1$ converges to zero for $n \rightarrow \infty$, that is:
\begin{align*}
R_{l,q} &= \sum_{p_k =l\mod q,\,p_k\leq n}p_k^{-1} \sim\frac{1}{q-1}\int_2^n \frac{1}{x\ln(x)}dx\\
&=\frac{1}{q-1}\left(\ln \ln(n) + C\right).
\end{align*}
The factors $\cos\left(\frac{2\pi l}{q}\right)$ are exactly the prime roots of unity and the sum $\sum_{l=0}^{q-1}\cos\left(\frac{2\pi l}{q}\right) = 0$. Consequently, one computes
\begin{align*}
V_{1,1}\left(n,\frac{1}{q}\right) &\sim \frac{1}{q} - \frac{1}{q-1}\left(\ln\ln(n) +C\right).
\end{align*}
As we have $V_{1,1}(n,1)=\sum_{p\leq n}p^{-1} \sim \ln\ln(n) + M$, one could argue that 
$$V_{1,1}(n,\frac{t}{q}) \approx \frac{1}{1-q}V_{1,1}(n,t)+\frac{1}{q}, \quad q \geq 3,\;\mbox{prime},$$
see Fig.~\ref{fig:similarity}. But keep in mind that these are only asymptotic equivalences while our partial sum $V_{1,1}(n,t)$ do not converge for $n \rightarrow \infty$, so the self similarity of the graph is certainly not strict.
 \begin{figure}
\includegraphics[width=\textwidth]{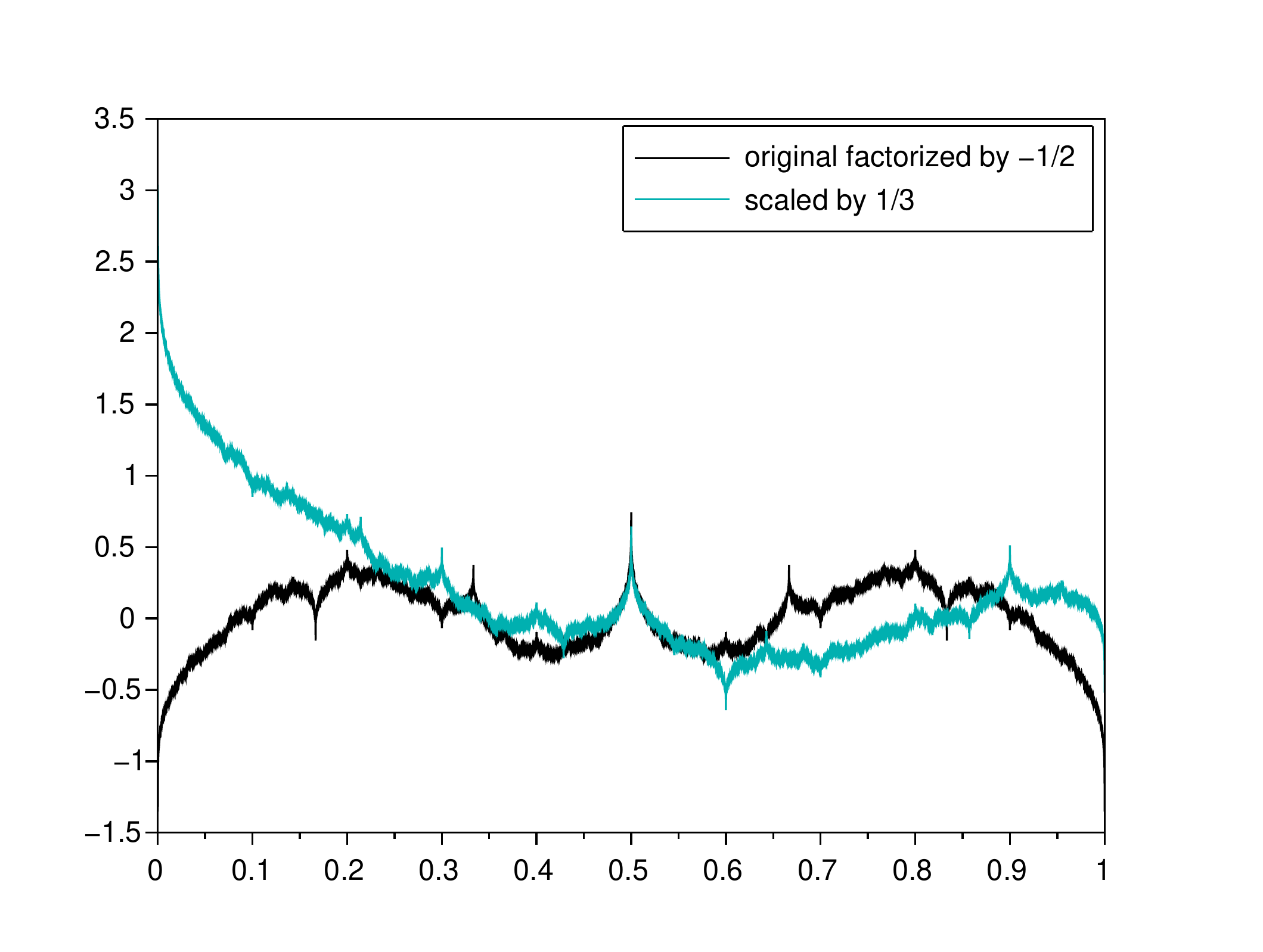}
\caption{The graph of the real part of $-\frac{1}{2}V_{1,1}(10^6,t)$ (in black) and $V_{1,1}(10^6,t/3)+\frac{1}{3}$ (in grey). }
\label{fig:similarity}
\end{figure} 
\subsubsection{Fractal dimension of $V_{\alpha,\beta}$.}
Further, we compute numerically the \textit{box dimension} of the graph of $V_{\alpha,\beta}$ defined in the following way: let $A:=[a,b]\times[c,d]$ be the rectangle such that the graph $V_{\alpha,\beta}(n,t) \subset A$ is contained. We compute then for $i,j=0,\dots N-1$ the intersections $V_{\alpha,\beta}(n,t) \cap [a + i(b-a)/N,a+(i+1)(b-a)/N] \times [c + j(d-c)/N,c+(j+1)(d-c)/N]$. We denote the number of non-empty intersections by $M(N)$. The box dimension is then given by  
$$\dim_B\left(V_{\alpha,\beta}(n,t)\right)=\lim_{N\rightarrow \infty}\frac{\ln(M(N))}{\ln(N)}.$$
In accordance to our results on regularity of $V_{\alpha,\beta}$ we obtain the following Fig.~\ref{fig:box_dimension} for the (numerically computed) box dimension $\dim_B$ over the fraction $\frac{\alpha}{\beta}$. For $\alpha > 1$ fixed and $\beta \rightarrow \infty$, that is, $\frac{\alpha}{\beta}\rightarrow 0$, we expect that the fractal dimension converges to $2$. On the other hand, for $\beta \rightarrow 0$, the fractal dimension should converge to $1$ as the graph gets continuously differentiable if $\frac{\alpha-1}{\beta} > 1 + \frac{1}{\beta}$.
\begin{figure}
\includegraphics[width=\textwidth]{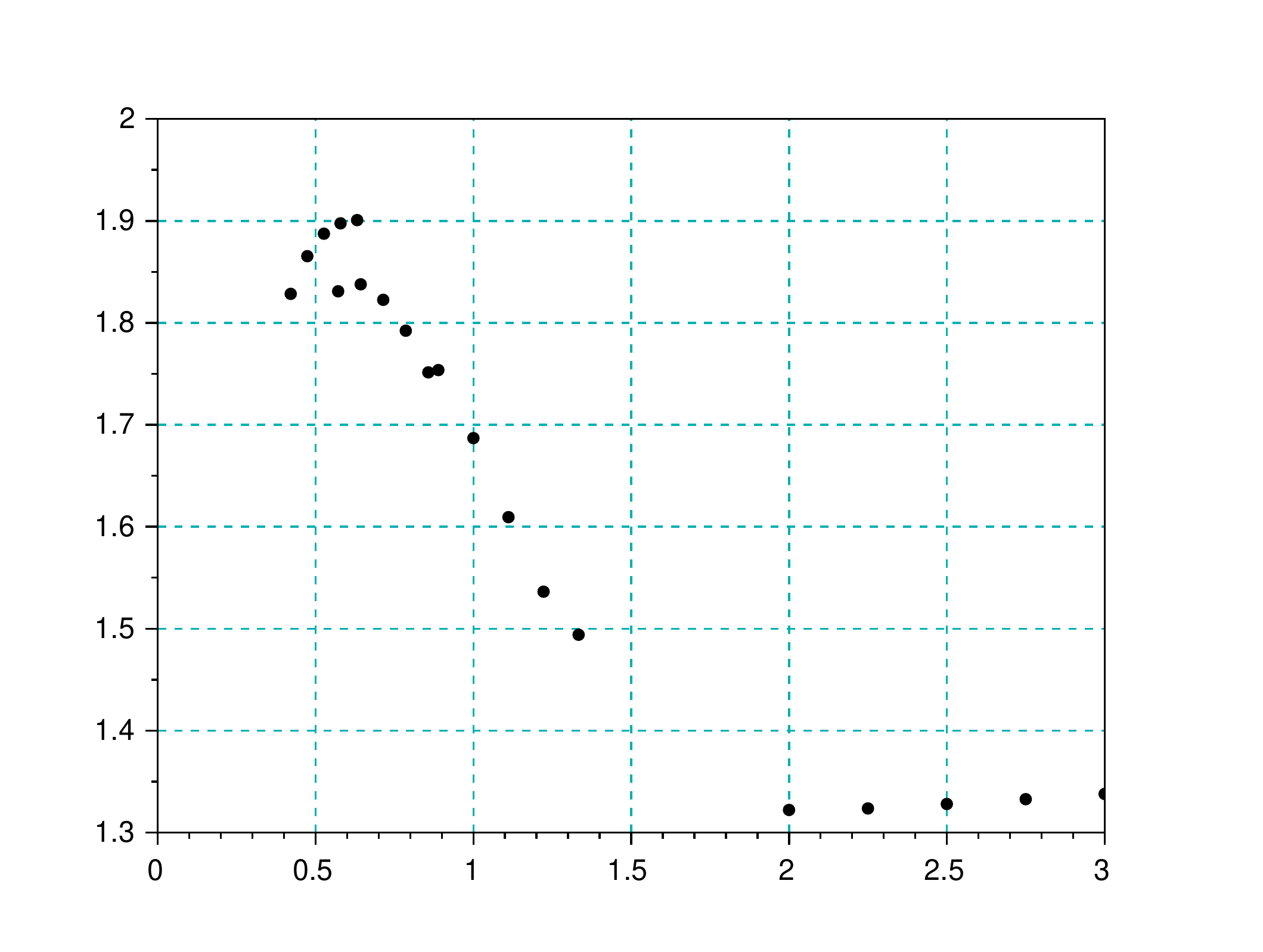}
\caption{Box dimension of $V_{\alpha,\beta}$ in dependence on the fraction of the powers $\frac{\alpha}{\beta}$ with $\alpha \in [1,1.5]$ and $\beta \in [0.5,3]$. Remark that $V_{\alpha,\beta}$ is not convergent for $\alpha=1$.}
\label{fig:box_dimension}
\end{figure}

\section{Random properties for $V_{\alpha,\beta}$}\label{sec:random}
The quite similar behavior of lacunary and random Fourier series let us think that it might be possible to capture the random character of the series $V_{\alpha,\beta}$ which is the subject of this section. Let us briefly review what it is known in the context of lacunary sequences and random variables:
\subsection{Lacunary sequences behaving as independent random variables: short overview}
The terms $(\sin(2\pi k x)_k$ and $(\cos(2\pi kx)_k$ behave like random variables, but strongly dependently. But if one restricts the sequence of frequencies $(2\pi k)_{k \geq 0}$ to $(2\pi n_k)_{k \geq 0}$ where the sequence $(n_k)_{k \geq0}$ has sufficiently fast growing gaps, i.e. \begin{equation}\label{eq:growth_condition}\frac{n_{k+1}}{n_k} \geq 1 + \rho, \quad \rho >0 \;\mbox{(Hadamard gap condition)},\end{equation}
then the sequences $\left(\sin(2\pi n_k x\right)$ behave like independent random variables. For example, one has 
$$\frac{1}{\sqrt{N}}\sum_{k=1}^N \sin(2\pi n_k x) \rightarrow \mathcal{N}(0,1),$$
where $\mathcal{N}(0,1)$ is the normal distribution. This was the main observation which has led to study the connections between lacunary and random Fourier series, most importantly the question which are the optimal growth conditions on the sequence $(n_k)_k$ such that the sequence $\left(f(n_k)\right)_{k \geq 0}$ for general periodic measurable functions $f$ with vanishing integral exhibits random properties (see historical overview \cite{kahane1997}). By introducing weights $a_k$ which obey certain growth conditions themselves, one can recover several limit theorems in complete analogy to random variables. In particular, the Central Limit Theorem (CLT) and the Law of Iterated Logarithm (LIL) are true (see results by Salem-Zygmund in \cite{salem_zygmund1947}, \cite{salem_zygmund1948}, Erd\"{o}s-G\'{a}l in \cite{erdoes_gal_1955} and Weiss in \cite{weiss_1959}). Further, it can be shown that the process can be approximated by a standard Brownian motion: 
\begin{theorem}[Philipp-Stout \cite{philipp_stout_1975}]
Assume the Hadamard gap condition. Assume further that $A_N:=\sqrt{\frac{1}{2}\sum_{k=1}^N a_k^2}\rightarrow \infty$ and there exists $\delta > 0$ such that $\lim_{N\rightarrow \infty} \frac{a_N}{A_N^{1-\delta}} =0$. Then without changing the distribution of the process $$S(t,x)=\sum_{k \leq t}a_k\cos(2\pi n_k x),\quad t \geq 0,$$it can be redefined on a suitable probability space together with a Wiener process $\left\{W(t)\;\big|\;t \geq 0\right\}$ such that 
$$S(t,x)=W\left(A_t\right) + \mathcal{O}\left(A_t^{\frac{1}{2}-\rho}\right), \quad \mbox{almost surely for some}\;\rho > 0.$$
\end{theorem}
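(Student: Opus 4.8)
Since this is the cited theorem of Philipp and Stout, the plan is to reconstruct the standard \emph{almost sure invariance principle} (strong approximation) argument for weakly dependent sequences, specialized to lacunary cosines. I would regard $x$ as uniformly distributed on $[0,1)$, so that $\xi_k(x) := \cos(2\pi n_k x)$ become centered random variables with $\Var(a_k \xi_k) = \tfrac12 a_k^2$, consistent with the definition of $A_N$. The first and decisive step is to quantify the near-independence forced by the Hadamard gap condition \eqref{eq:growth_condition}: a mixed moment $\int_0^1 \xi_{k_1}\cdots\xi_{k_j}\,dx$ is nonzero only when a resonance $\pm n_{k_1}\pm\cdots\pm n_{k_j}=0$ occurs, and the ratio bound $n_{k+1}/n_k \geq 1+\rho$ severely restricts the number of such solutions. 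This yields moment and characteristic-function estimates showing that the $\xi_k$ behave like an independent sequence up to exponentially small errors.

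The second step is a blocking (big-block/small-block) decomposition. I would partition the index set into consecutive big blocks $H_1,H_2,\dots$ separated by short buffer blocks, choosing the block lengths so that the variance contributed by each big block is comparable while the buffers contribute a negligible fraction. The weight hypothesis $a_N/A_N^{1-\delta}\to 0$ plays the role of a Lindeberg-type negligibility condition guaranteeing that no single term dominates a block; combined with $A_N\to\infty$ it ensures that the block sums $T_j := \sum_{k \in H_j} a_k \xi_k$ are asymptotically Gaussian and, thanks to the buffers and the near-independence of step one, asymptotically independent.

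The third step is the embedding. On a suitably enlarged probability space I would invoke the Skorokhod embedding theorem to realize the (approximately independent, approximately Gaussian) block sums $T_j$ as increments $W(s_j)-W(s_{j-1})$ of a Wiener process $W$ evaluated at stopping times, and then compare the stopping-time increments with the true variance increments $\tfrac12\sum_{k\in H_j}a_k^2$. A Borel--Cantelli argument controlling the discrepancy of the accumulated stopping times against $A_t^2$, together with bounds on the within-block and buffer fluctuations, produces the error term $\mathcal{O}(A_t^{1/2-\rho})$ after reindexing time by the variance clock $t\mapsto A_t$.

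The hard part is step one together with the bookkeeping in step two: extracting a genuinely quantitative independence statement under only the Hadamard gap condition---where the resonances $\pm n_{k_1}\pm\cdots\pm n_{k_j}=0$ are merely sparse rather than absent---and then calibrating the block lengths, the buffer widths, and the gap constant $\rho$ so that every accumulated error (Gaussian approximation, residual dependence, and time-change discrepancy) falls below the target exponent $\tfrac12-\rho$. It is precisely this calibration that ties the final rate to the structural constants of the problem, and it is where the original Philipp--Stout memoir expends most of its effort.
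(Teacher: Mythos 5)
You are reconstructing a theorem the paper does not prove: it is imported verbatim from the Philipp--Stout memoir \cite{philipp_stout_1975}, so there is no in-paper argument to compare against, and your sketch has to be judged against the original. At the level of architecture you are close to that original: blocking into big blocks with small buffer blocks, Gaussian approximation of block sums, a Skorokhod-type embedding into a Wiener process, and a Borel--Cantelli comparison of the accumulated variance clock with $A_t^2$ is genuinely how the memoir is organized, and your reading of the hypothesis $a_N/A_N^{1-\delta}\to 0$ as a negligibility condition calibrating block sizes and the final exponent is correct.

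The genuine gap is in your step one. Under only the Hadamard gap condition the sequence $\xi_k=\cos(2\pi n_k x)$ is \emph{not} ``independent up to exponentially small errors'': resonances among nearby indices produce correlations of constant order. Concretely, for $n_k=2^k$ one has $n_{k+1}-n_k-n_k=0$, hence $\int_0^1 \cos(2\pi n_{k+1}x)\cos^2(2\pi n_k x)\,dx=\tfrac14$, which no moment bookkeeping can make small; the terms are pairwise uncorrelated but genuinely dependent at short range. Near-independence with geometrically small error holds only between \emph{well-separated} blocks (that is the entire function of the buffers), and the short-range dependence inside blocks must be handled by a different device. Philipp and Stout do this by approximating the lacunary sums by a martingale difference array with respect to a filtration of dyadic $\sigma$-fields and then applying an almost sure invariance principle for martingales (Strassen-type, itself proved via Skorokhod embedding); it is this martingale approximation, not resonance counting, that yields a quantitative rate $\mathcal{O}\left(A_t^{\frac12-\rho}\right)$ rather than a merely distributional statement such as the Salem--Zygmund CLT. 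As written, your plan stalls exactly where you locate the difficulty: without the martingale (or a conditional Berry--Esseen with explicit block separation) substitute for step one, the calibration in steps two and three has no quantitative input to propagate, and the target exponent cannot be extracted.
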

While the Hadamard growth condition (\ref{eq:growth_condition}) for CLT can be weakened for general sequences $(n_k)_k$ (see \cite{erdoes_1962}) for coefficients $a_k=1$ to the optimal growth condition $\frac{n_{k+1}}{n_k} \geq 1 + \frac{c_k}{\sqrt{k}}$ with $c_k \rightarrow \infty$, one has observed that sequences with much slower growth can nevertheless satisfy the CLT if they fulfill certain arithmetic conditions, more precisely, bounds on the number of solutions for the diophantine equation. Results in this direction started with Gaposhkin (\cite{gaposhkin_1966}) and were recently sharpened by Berkes, Philipp and Tichy (\cite{berkes_philipp_tichy_2008}).  
The difficulties with the prime sequence $\left(p_k\right)_{k \geq 0}$ are on both sides: Firstly, while it is sure that the prime sequence is not a Hadamard sequence, neither precise lower nor upper bounds for the prime gap $p_{k+1}-p_k$ are known. The best results for a lower bound which would be of interest for us do not hold for all $k \geq 0$ but only infinitely many. For the upper bound it is proved by Goldston, Pintz and Yildirim (\cite{goldston_2009}) that $\lim \inf_{k \rightarrow \infty} \frac{\Delta_k}{\log p_k}=0.$
Secondly, there is no building law for prime numbers known and the infinite recurrence of certain patterns like twin primes are only conjectured but not completely proved. On the other hand, the random character of prime numbers is often invoked without being analytically established anywhere although the random model by Cram\'{e}r (\cite{cramer_1936}) is widely used and reproduces some results very efficiently (but fail in other aspects, e.g. in forecasting the size of the prime gap). In the question on convergence of functions $f(n_k)$ random models were also introduced (see e.g. \cite{schatte_1988}). Obviously, this is a broad and intensively studied mathematical subject where we do not dare to make contributions. Therefore, we stay more closely to our studied series: 
\subsection{The central limit theorem}
Because of the reason mentioned above we have not been able to show the central limit theorem for the random variables $\sin(\pi p_k x)$ or $\cos(\pi p_k x)$, the base of our series $V_{\alpha,\beta}$. Nevertheless, numerical computations strongly suggest that the central limit theorem holds, see Fig.~\ref{fig:clt}: we took $10^4$ uniformly distributed points $x$ of the interval $\left[-\frac{\pi}{2},\frac{\pi}{2}\right]$ and computed the sample average $\frac{1}{N}\sum_{k=1}^N\sin(p_k x)$ for $N=78498$, that is, the number of primes $\leq 10^6$. We computed the histogram for the values of the sample average which experimentally tends to a normal distribution as the size of the sample tends to infinity.  
\begin{figure}
\begin{minipage}{\textwidth}
 \includegraphics[width=\textwidth]{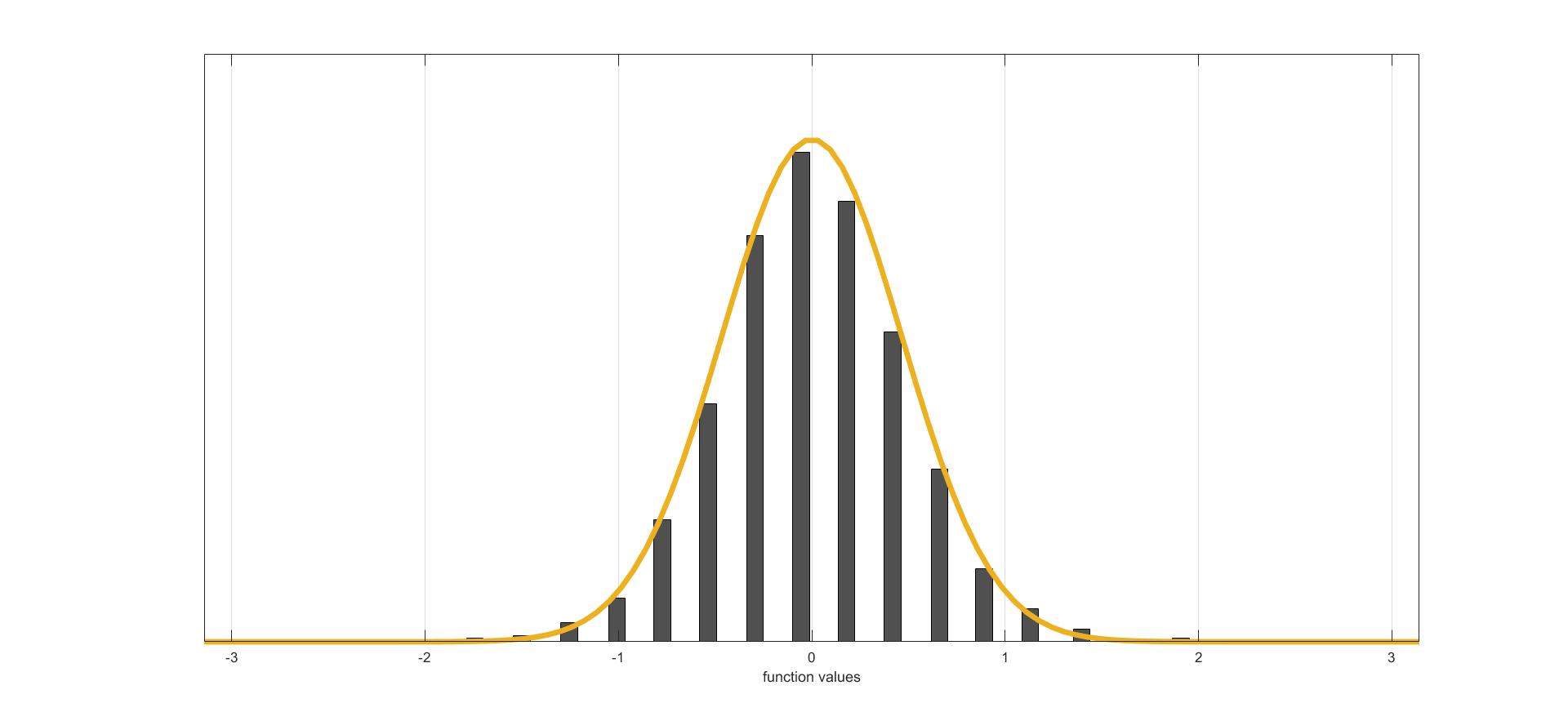}

\end{minipage}  
\begin{minipage}{\textwidth}
 \includegraphics[width=\textwidth]{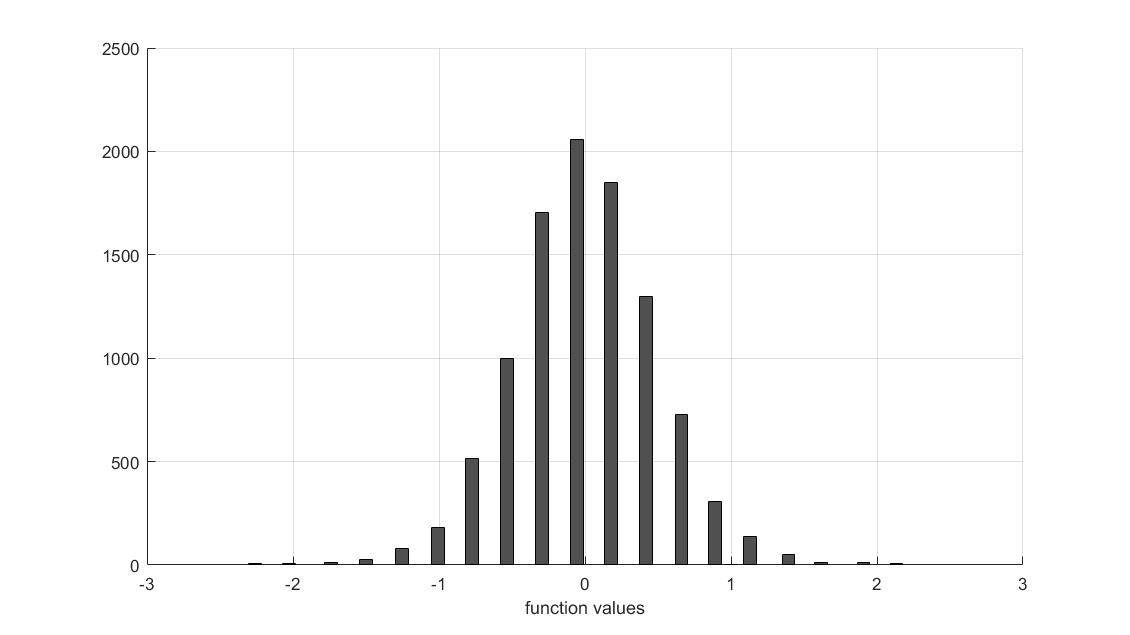}

\end{minipage} 
\caption{Normal distribution of $\frac{1}{N}\sum_{k=1}^N \sin(\pi p_k x)$ for $x$ uniformly distributed in $\left[-\frac{\pi}{2},\frac{\pi}{2}\right]$. } 
\label{fig:clt}
\end{figure}
\begin{figure}

 \includegraphics[width=\textwidth]{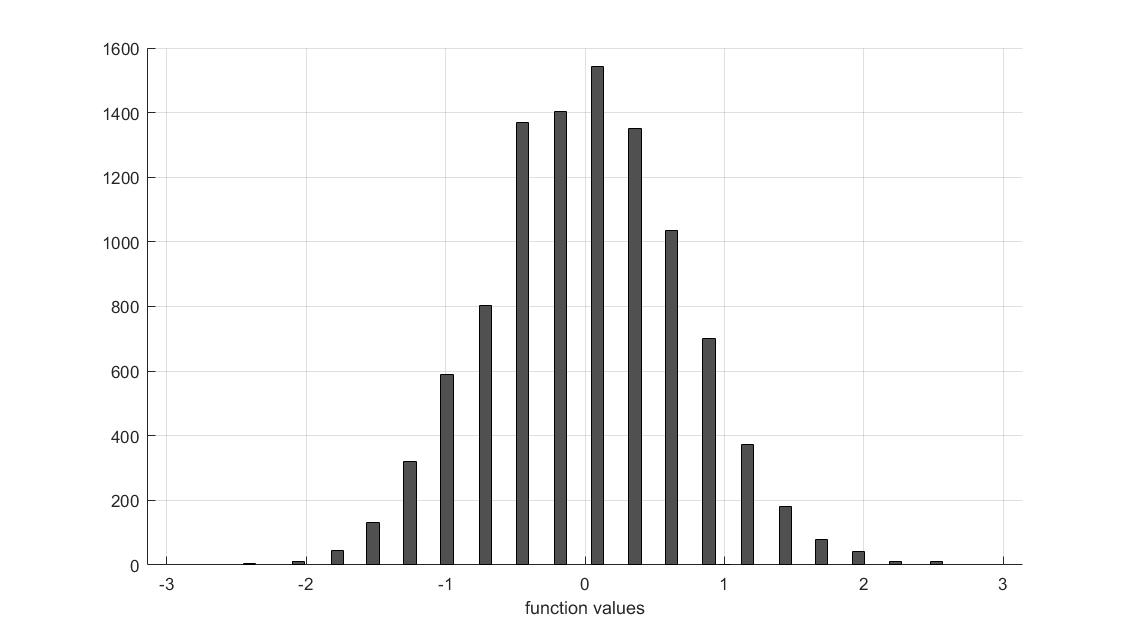}

\caption{Normal distribution of $\frac{1}{N}\sum_{k=1}^N \sin(\pi p_k^{\frac{3}{2}} x)$ for $x$ uniformly distributed in $\left[-\frac{\pi}{2},\frac{\pi}{2}\right]$. } 
\label{fig:clt2}
\end{figure}

\section{Concluding remarks}
The properties of the series $V_{\alpha,\beta}$ which we have discussed in this article are intimately related to the distribution of prime numbers, and it was mostly due to the unanswered questions on prime numbers that the analytical access to our series is limited. Therefore, knowledge on the distribution and bounds for the gaps of prime numbers would imply more or less directly properties where we were restricted to a numerical approach.

Although the series might remind of the Riemann zeta function or other number-theoretical functions, we did not construct $V_{\alpha,\beta}$ in this way and do not see a possibility to deduce it from any of them, besides from the trivial fact, that $V_{\alpha,\beta}(0)$ is equal to the prime zeta function $P(\alpha)=\sum_p p^{-\alpha}$.


\FloatBarrier
\bibliographystyle{apalike} 
\bibliography{bibtex}
\end{document}